\newcommand{\bm}[1]{\boldsymbol{#1}}
\newcommand{\bmr}[1]{\bm{\mr{#1}}}
\newcommand{\lj}{[ \hspace{-2pt} [}
\newcommand{\rj}{] \hspace{-2pt} ]}
\newcommand{\mb}[1]{\mathbb{#1}}
\newcommand{\mc}[1]{\mathcal{#1}}
\newcommand{\mr}[1]{\mathrm{#1}}
\newcommand{\jump}[1]{\lj #1 \rj}
\newcommand{\aver}[1]{ \{#1\}  }
\newcommand{\wt}[1]{ \widetilde{ #1}}
\newcommand{\DGnorm}[1]{ \| #1\|_{\mr{DG}}}
\newcommand{\DGenorm}[1]{ |\!|\!| #1 |\!|\!|}
\renewcommand{\d}[1]{\mathrm d \boldsymbol{#1}}
\def\curl{\ifmmode \mathrm{curl} \else \text{curl}\fi}
\def\Curl{\ifmmode \mathrm{Curl} \else \text{Curl}\fi}
\def\div{\ifmmode \mathrm{div} \else \text{div}\fi}
\def\Div{\ifmmode \mathrm{Div} \else \text{Div}\fi}
\def\dim{\ifmmode \mathrm{dim} \else \text{dim}\fi}
\def\MTh{\mc{T}_h}
\def\MEh{\mc{E}_h}
\def\un{\bm{\mr n}}
\def\Ned{\ifmmode \text{N\'ed\'elec} \else \text{N\'ed\'elec} \fi}
\newcommand\comment[1]{}
\newcommand\substitute[2]{#2}
\newcommand\rrrevise[1]{#1}
\newtheorem{assumption}{Assumption}
\newtheorem{theorem}{Theorem}
\newtheorem{lemma}{Lemma}
\newtheorem{corollary}{Corollary}
\newtheorem{remark}{Remark}
\definecolor{orange}{rgb}{1, 0.5, 0}
\title[Fourth-order Curl Problem]{An arbitrary order Reconstructed
Discontinuous Approximation to Fourth-order Curl Problem}
\author[R. Li]{Ruo Li} \address{CAPT, LMAM and School of Mathematical
Sciences, Peking University, Beijing 100871, P.R. China}
\email{rli@math.pku.edu.cn}
\author[Q.-C. Liu]{Qicheng Liu} \address{School of Mathematical
Sciences, Peking University, Beijing 100871, P.R. China}
\email{qcliu@pku.edu.cn}
\author[S.-H. Zhao]{Shuhai Zhao} \address{School of Mathematical
Sciences, Peking University, Beijing 100871, P.R. China}
\email{shuhai@pku.edu.cn}
\begin{document}
\maketitle

\begin{abstract}
We present an arbitrary order discontinuous Galerkin finite element
method for solving the fourth-order curl problem using a reconstructed
discontinuous approximation method. It is based on an arbitrarily high-order 
approximation space with one unknown
per element in each dimension. The discrete 
problem is based on the symmetric IPDG method. We prove
\emph{a priori} error estimates under the energy norm and the $L^2$ 
norm and show numerical results to verify the theoretical analysis.
\\
 \textbf{keywords}: fourth-order curl problem, 
patch reconstruction, discontinuous Galerkin method
\end{abstract}

\section{Introduction}
\label{sec_introduction}
We are concerned in this paper with the fourth-order curl problem,
which has applications in inverse electromagnetic
scattering, and magnetohydrodynamics (MHD) when modeling magnetized 
plasmas.
Discretizing the fourth-order curl operator is one of the keys to 
simulate these models. 
Additionally, the fourth-order curl operator plays an important role in 
approximating the Maxwell transmission eigenvalue problem. 
Therefore, it is important to design highly efficient and accurate 
numerical methods for fourth-order curl problems 
\cite{Cakoni2007AVA,monk2012}.

Finite element methods (FEMs) are a widely used numerical scheme for
solving partial differential equations.
The design of FEMs for fourth-order curl problems is challenging. Nevertheless, 
in recent years, researchers have proposed and analysed various 
FEMs for these problems to address this difficulty. According to 
whether the finite element space is contained within the space of the 
exact solution, finite element methods can be classified into 
conforming methods and nonconforming methods. For conforming methods, 
one needs to construct $H(\curl^2)$-conforming finite elements, we
refer to \cite{curlconform2019,curlconform2020} for some recent works.
Due to the high order of the fourth-order curl operator, it is still a
difficult task to implement a high-order conforming space. Therefore,
many researches focus on nonconforming methods. The discontinuous
Galerkin (DG) methods use completely discontinuous piecewise
polynomial to approximate the solution of partial differential
equations. These methods enforce numerical solutions to be close to
the exact solution by adding penalties. DG methods are
commonly used in solving complex problems due to their simplicity and
flexibility in approximation space. For fourth-order curl
problems, we refer to \cite{zheng2010quadcurl,JCM565, wg2019quadcurl,
mix2013quadcurl,mix2018quadcurl} for some DG methods.

A significant drawback of DG methods is the large number of degrees of 
freedom in DG space, which results in high computational costs. This
drawback is a matter of concern. 
In this paper, we propose an arbitrary order discontinuous Galerkin 
FEM for solving the fourth-order curl problem.
The method is based on a reconstructed approximation space
that has only one unknown per dimension in each element. The
construction of the approximation space includes
creating an element patch per element and solving a local least 
squares fitting problem to obtain a local high-order polynomial. 
Methods based on the reconstructed spaces are called reconstructed 
discontinuous approximation (RDA) methods, which have been 
successfully applied to a series of classical problems
\cite{Li2020interface, Li2016discontinuous, Li2019reconstructed,
Li2019sequential,Li2023curl, Li2019least}. 
The reconstructed space is a subspace of the standard DG space,
which can approximate functions to high-order accuracy and
inherits the flexibility on the mesh partition in the meanwhile. One advantage of this
space is that it has very few degrees of freedom, which gives high 
approximation efficiency of finite element. 
Using this new space, we design the discrete scheme for the 
fourth-order curl problem under the symmetric interior penalty 
discontinuous Galerkin (IPDG) framework.
We prove the convergence rates under
the energy norm and the $L^2$ norm, and numerical experiments
are conducted to verify the theoretical analysis ,showing our 
algorithm is simple to implement and can reach high-order accuracy. 

The rest of this paper is organized as follows. In Section
\ref{sec_preliminaries}, we introduce the fourth-order curl problem without div-free condition
and give the basic notations about the Sobolev spaces and the
partition. We also recall two commonly used inequalities in this
section. In Section \ref{sec_space}, we establish the reconstruction
operator and the corresponding approximation space. Some basic
properties of the reconstruction are also proven. In
Section \ref{sec_curl_problem}, we define the discrete
variational form for the fourth-order curl problem and analyse the error
under the energy norm and the $L^2$ norm, and prove that the convergence 
rate is optimal with respect to the energy norm. In Section
\ref{sec_numericalresults}, we carried out some numerical examples to
validate our theoretical results. Finally, a brief conclusion is given in Section
\ref{sec_conclusion}.


\section{Preliminaries}
\label{sec_preliminaries}
Let $\Omega \subset \mb{R}^d (d = 2, 3)$ be a bounded polygonal
(polyhedral) domain with a Lipschitz boundary $\partial \Omega$. 
\\
Given a bounded domain $D \subset \Omega$, we follow the standard
definitions to the space $L^2(D), L^2(D)^d$, the spaces $H^q(D),
H^q(D)^d$ with the regular exponent $q \geq 0$. The corresponding
norms and seminorms are defined as
\begin{displaymath}
  \| \cdot \|_{H^q(D)} := \left(  \| \cdot
  \|_{H^q(D)}^2 \right)^{1/2}, \,\, | \cdot |_{H^q(D)} 
  := \left(  | \cdot |_{H^q(D)}^2 \right)^{1/2},
\end{displaymath}
respectively. 

Throughout this paper, for two vectors $\bm{a} = (a_1, \ldots, a_d) \in
\mb{R}^d, \bm{b} = (b_1, \ldots, b_d) \in \mb{R}^d$, the cross product
$\bm{a} \times \bm{b}$ is defined as 
\begin{displaymath}
  \bm{a} \times \bm{b} := \begin{cases}
    a_1b_2 - a_2b_1, \quad d = 2, \\
    (a_2b_3 - a_3b_2, a_3b_1 - a_1b_3, a_1b_2 - a_2b_1)^T, \quad d =
    3. \\
  \end{cases}
\end{displaymath}
The cross product between a vector and a scalar will be
used in two dimensions, and in this case for a vector $\bm{a} = (a_1,
a_2) \in \mb{R}^2$ and a scalar $b \in \mb{R}$,  $\bm{a} \times b$ is
defined as $\bm{a} \times b := (a_2 b, -a_1b)^T$.
Specifically, for a vector-valued function $\bm{u} \in \mb{R}^d$, 
the curl of $\bm{u}$ reads
\begin{displaymath}
  \curl\bm{u} :=  \nabla \times \bm{u} = \begin{cases}
    \frac{\partial u_2 }{\partial x} - \frac{\partial u_1}{\partial
    y}, \quad d = 2, \\
    (\frac{\partial u_3}{\partial y} - \frac{\partial u_2}{\partial
    z}, \frac{\partial u_1}{\partial z} - \frac{\partial u_3}{\partial
    x}, \frac{\partial u_2}{\partial x} - \frac{\partial u_1}{\partial
    y} )^T, \quad d = 3,
  \end{cases}
\end{displaymath}
and for the scalar-valued function $q(x,y)$, we let $\nabla \times q$ 
be the formal adjoint, which reads
\begin{displaymath}
  \nabla \times q = \left( \frac{\partial q}{\partial y},
  - \frac{\partial q}{\partial x} \right)^T.
\end{displaymath}
For convenience, we mainly use the notations for the three-dimensional
case. 

In this paper, we consider the following fourth-order curl problem
\begin{equation}
 \left\{
  \begin{aligned}
    &\curl^{4}\bm{u}+\bm{u} = \bm{f}, &\quad \text{ in } \Omega,
    \\
    & \bm{u}\times\bm{n} = g_1,  &\quad \text{ on } \partial \Omega,
    \\
    &  (\nabla \times \bm{u})\times\bm{n} = g_2, &\quad \text{ on } \partial \Omega,
  \end{aligned}
 \right.
 \label{eq_quadcurl}
\end{equation}
For the problem domain $\Omega$, we define the space 
\begin{displaymath}
  \begin{aligned}
    H(\curl^s, \Omega) &:= 
      \left\{ \bm{v} \in L^2(\Omega)^d \ | \ \curl^j \bm{v} \in
      L^2(\Omega), 1\le j\le s \right\},  
  \end{aligned}
\end{displaymath}
and
\begin{displaymath}
  \begin{aligned}
    H_0(\curl^2, \Omega) &:= 
      \left\{ \bm{v} \in L^2(\Omega)^d \ | \ \curl^j \bm{v} \in
      L^2(\Omega), \ \curl^{j-1} \bm{v} = 0 \ \text{on}\  \partial \Omega,1\le j\le 2 \right\},  
  \end{aligned}
\end{displaymath}
For the weak formulation we are going to find 
$\bm{u}\in H(\curl^2,\Omega)$ such that 
 \begin{displaymath}
  \begin{aligned}
    a(\bm{u},\bm{v})=(\bm{f},\bm{v})_{L^2(\Omega)},\ \forall \bm{v}\in
    H_0(\curl^2,\Omega),
  \end{aligned}
\end{displaymath}
where 
\begin{displaymath}
  a(\bm{u}, \bm{v}) := (\curl^2 \bm{u}, \curl^2 \bm{v})_{L^2(\Omega)}
  + (\bm{u}, \bm{v})_{L^2(\Omega)}.
\end{displaymath}
We refer to \cite{regcurl, mix2018quadcurl} for some regularity results of this problem.

Next, we define some notations about the mesh. Let $\MTh$ be a regular 
and quasi-uniform partition $\Omega$ into disjoint open triangles 
(tetrahedra). Let $\MEh$ denote the set of all $d-1$
dimensional faces of $\MTh$, and we decompose $\MEh$ into $\MEh = 
\MEh^i \cup \MEh^b$, where $\MEh^i$ and $\MEh^b$ are the 
sets of interior faces and boundary faces, respectively. We let 
\begin{displaymath}
  h_K := \text{diam}(K), \quad \forall K \in \MTh, \quad h_e :=
  \text{diam}(e), \quad \forall e \in \MEh, 
\end{displaymath}
and define $h := \max_{K \in \MTh} h_K$.
The quasi-uniformity of the mesh $\MTh$ is in the sense that there
exists a constant $\nu > 0$ such that $h \leq \nu \min_{K \in
\MTh} \rho_K$, where $\rho_K$ is the diameter of the
largest ball inscribed in $K$. One can get the inverse inequality and
the trace inequality from the regularity of the mesh, which are
commonly used in the analysis.
\begin{lemma}
  There exists a constant $C$ independent of the mesh size $h$, such that
  \begin{equation}
    \| v \|^2_{L^2(\partial K)} \leq C \left( h_K^{-1} \| v 
    \|^2_{L^2(K)} + h_K \| \nabla v \|_{L^2(K)}^2 \right), \quad
    \forall v \in H^1(K).
    \label{eq_trace}
  \end{equation}
  \label{le_trace}
\end{lemma}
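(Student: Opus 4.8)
The plan is to prove the estimate by the standard scaling argument, transferring the continuous trace embedding on a fixed reference simplex to the physical element $K$ via the affine map. Let $\hat K$ denote the reference simplex and let $F_K(\hat{\bm x}) = B_K \hat{\bm x} + \bm b_K$ be the affine bijection with $F_K(\hat K) = K$. For $v \in H^1(K)$ set $\hat v := v \circ F_K \in H^1(\hat K)$. The shape-regularity/quasi-uniformity hypothesis $h \leq \nu \min_{K} \rho_K$ yields $h_K/\rho_K \leq \nu$ for every $K$, hence the standard bounds $\|B_K\| \lesssim h_K$ and $\|B_K^{-1}\| \lesssim h_K^{-1}$ with constants depending only on $\nu$ and $d$, together with $|\det B_K| \sim h_K^d$ and a comparable scaling $h_K^{d-1}$ for the surface measure of $\partial K$.

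First I would record the three scaling identities obtained by the change of variables $\bm x = F_K(\hat{\bm x})$: $\|v\|_{L^2(K)}^2 \sim h_K^{d}\,\|\hat v\|_{L^2(\hat K)}^2$, $\|v\|_{L^2(\partial K)}^2 \sim h_K^{d-1}\,\|\hat v\|_{L^2(\partial \hat K)}^2$, and, using $\nabla_{\bm x} v = B_K^{-T}\hat\nabla \hat v$, the two-sided bound $\|\nabla v\|_{L^2(K)}^2 \sim h_K^{d-2}\,\|\hat\nabla \hat v\|_{L^2(\hat K)}^2$. Next I would invoke the continuous trace theorem on the fixed Lipschitz domain $\hat K$, namely $\|\hat v\|_{L^2(\partial \hat K)}^2 \leq \hat C\big(\|\hat v\|_{L^2(\hat K)}^2 + \|\hat\nabla \hat v\|_{L^2(\hat K)}^2\big)$ with $\hat C = \hat C(\hat K)$ independent of $h$. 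Substituting the scaling identities into this inequality gives
\begin{displaymath}
  h_K^{-(d-1)}\,\|v\|_{L^2(\partial K)}^2 \leq C\left( h_K^{-d}\,\|v\|_{L^2(K)}^2 + h_K^{-(d-2)}\,\|\nabla v\|_{L^2(K)}^2\right),
\end{displaymath}
and multiplying through by $h_K^{d-1}$ yields exactly \eqref{eq_trace}, with $C$ depending only on $d$ and $\nu$.

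The only genuinely delicate point is ensuring the constant is uniform over the mesh: this is precisely where shape-regularity enters, since it is what makes every $K$ affine-equivalent to $\hat K$ with transformation matrices whose norms scale cleanly in $h_K$, so that $\hat C(\hat K)$ does not deteriorate. An alternative route, avoiding the reference element, is to pick on $K$ a vector field $\bm\phi$ with $\bm\phi\cdot\un \geq c > 0$ on $\partial K$, $|\bm\phi| \lesssim h_K$ and $|\div\bm\phi| \lesssim 1$ (again available by shape-regularity), apply the divergence theorem to $|v|^2\bm\phi$, and close the cross term $2\int_K v\,\nabla v\cdot\bm\phi$ with Young's inequality; this gives the same bound with an explicit geometric constant. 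Either way the argument is short once the scaling bookkeeping is done carefully.
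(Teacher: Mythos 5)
Your main argument is correct and is exactly the standard scaling proof behind the reference the paper cites for this lemma (the paper itself does not prove it, deferring to Brenner--Scott); the three scaling identities, the trace theorem on the fixed reference simplex, and the role of quasi-uniformity in keeping $\|B_K\|\lesssim h_K$, $\|B_K^{-1}\|\lesssim h_K^{-1}$ are all in order. One small correction to the alternative you sketch at the end: the normalization $\bm{\phi}\cdot\un\ge c>0$ together with $|\bm{\phi}|\lesssim h_K$ is inconsistent, since on $\partial K$ one has $\bm{\phi}\cdot\un\le|\bm{\phi}|\lesssim h_K$. The correct choice is $\bm{\phi}(\bm{x})=\bm{x}-\bm{x}_0$ with $\bm{x}_0$ the center of the inscribed ball, for which $\bm{\phi}\cdot\un=\mathrm{dist}(\bm{x}_0,\mathrm{face})\ge\rho_K\gtrsim h_K$, $|\bm{\phi}|\le h_K$ and $\div\bm{\phi}=d$; then the divergence theorem applied to $|v|^2\bm{\phi}$ gives $h_K\|v\|^2_{L^2(\partial K)}\lesssim\|v\|^2_{L^2(K)}+h_K\|v\|_{L^2(K)}\|\nabla v\|_{L^2(K)}$, and Young's inequality and division by $h_K$ yield \eqref{eq_trace}. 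With that fix either route is complete.
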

\begin{lemma}
There exists a constant $C$ independent of the mesh size $h$, such that
  \begin{equation}
    \| v \|_{H^q(K)} \leq C h_K^{-q} \| v \|_{L^2(K)}, \quad 
    \forall v \in \mb{P}_l(K),
    \label{eq_inverse}
  \end{equation}
  where $\mb{P}_l(K)$ is the space of polynomial on $K$ with the 
  degree no more than $l$.
  \label{le_inverse}
\end{lemma}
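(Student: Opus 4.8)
The plan is to prove Lemma~\ref{le_inverse} by a standard affine scaling argument that reduces the bound to a norm equivalence on a fixed reference element. Fix once and for all a reference simplex $\hat{K}$ (say the unit $d$-simplex). For each $K \in \MTh$ there is an invertible affine map $F_K : \hat{K} \to K$, $F_K(\hat{x}) = B_K \hat{x} + b_K$ with $B_K \in \mb{R}^{d \times d}$. Since the quasi-uniformity assumption gives $\rho_K \leq h_K \leq h \leq \nu \rho_K$, the mesh is in particular shape-regular, $h_K / \rho_K \leq \nu$, and one has the classical bounds
\begin{equation*}
  \| B_K \| \leq C h_K, \qquad \| B_K^{-1} \| \leq C h_K^{-1}, \qquad C^{-1} h_K^d \leq |\det B_K| \leq C h_K^d,
\end{equation*}
with $C$ depending only on $d$, $\hat{K}$ and $\nu$.

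I would then pull $v \in \mb{P}_l(K)$ back to the reference element, setting $\hat{v} := v \circ F_K \in \mb{P}_l(\hat{K})$, and use the transformation estimates for Sobolev seminorms: for every integer $m \geq 0$,
\begin{equation*}
  |\hat{v}|_{H^m(\hat{K})} \leq C \, \|B_K\|^m \, |\det B_K|^{-1/2} \, |v|_{H^m(K)}, \qquad
  |v|_{H^m(K)} \leq C \, \|B_K^{-1}\|^m \, |\det B_K|^{1/2} \, |\hat{v}|_{H^m(\hat{K})},
\end{equation*}
which follow from the chain rule and the change-of-variables formula (for $m=0$ they just encode $\|v\|_{L^2(K)}^2 = |\det B_K| \, \|\hat{v}\|_{L^2(\hat{K})}^2$). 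Combined with the bounds above, these give $|v|_{H^m(K)} \leq C h_K^{-m} |\det B_K|^{1/2} |\hat{v}|_{H^m(\hat{K})}$ and $\|\hat{v}\|_{L^2(\hat{K})} \leq C |\det B_K|^{-1/2} \|v\|_{L^2(K)}$.

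The heart of the matter is the reference-element estimate: on the finite-dimensional space $\mb{P}_l(\hat{K})$ the functional $w \mapsto \|w\|_{L^2(\hat{K})}$ is a genuine norm (a polynomial vanishing a.e.\ on $\hat{K}$ is identically zero), hence by equivalence of all norms on a finite-dimensional space there is a constant $\hat{C} = \hat{C}(l,q,d,\hat{K})$ with $\|w\|_{H^q(\hat{K})} \leq \hat{C} \|w\|_{L^2(\hat{K})}$ for all $w \in \mb{P}_l(\hat{K})$. Applying this to $\hat{v}$ and chaining the inequalities above, for each integer $m$ with $0 \leq m \leq q$,
\begin{align*}
  |v|_{H^m(K)} &\leq C h_K^{-m} |\det B_K|^{1/2} |\hat{v}|_{H^m(\hat{K})} \leq C h_K^{-m} |\det B_K|^{1/2} \|\hat{v}\|_{H^q(\hat{K})} \\
  &\leq C \hat{C} h_K^{-m} |\det B_K|^{1/2} \|\hat{v}\|_{L^2(\hat{K})} \leq C h_K^{-m} \|v\|_{L^2(K)}.
\end{align*}
Squaring, summing over $0 \leq m \leq q$, and absorbing the bounded factor $h_K^{2(q-m)}$ (recall $h_K \leq \mathrm{diam}(\Omega)$) into the constant then yields $\|v\|_{H^q(K)}^2 \leq C h_K^{-2q} \|v\|_{L^2(K)}^2$, which is the claim.

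The argument is entirely routine; the only thing that really needs care is checking that the norm-equivalence constant $\hat{C}$, together with the transformation constants, depends only on the fixed data $l, q, d, \hat{K}, \nu$ — this independence is precisely what the quasi-uniformity of $\MTh$ provides, and it is what makes the final constant $h$-independent. If the exponent $q$ is taken to be non-integer, the same proof applies verbatim once the $H^m$ seminorms are replaced by the Sobolev--Slobodeckij seminorms, whose affine scaling $|\hat{v}|_{H^q(\hat{K})} \sim h_K^{\,q-d/2}\,|v|_{H^q(K)}$ is verified directly from their double-integral definition.
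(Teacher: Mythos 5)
Your scaling argument is correct and complete; it is the standard proof of the inverse inequality via pullback to a reference simplex and norm equivalence on the finite-dimensional space $\mb{P}_l(\hat{K})$. The paper does not prove Lemma~\ref{le_inverse} itself but simply cites \cite{Brenner2007mathematical}, where essentially this same argument is given, so there is nothing to contrast.
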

We refer to \cite{Brenner2007mathematical} for more details of these 
inequalities.

Finally, we introduce the trace operators that will be used in our 
numerical schemes. For $e \in \MEh^i$, we denote by $K^+$ and $K^-$ the
two neighbouring elements that share the boundary $e$, and $\un^+, \un^-$
the unit out normal vector on $e$, respectively. We define the jump
operator $\jump{\cdot}$ and the average operator $\aver{\cdot}$ as
\begin{equation}
  \begin{aligned}
    \jump{\bm{q}} &:=  \bm{q}|_{K^+}\times\un^+ + \bm{q}|_{K^-}\times \un^- 
    , \quad \text{ for vector-valued function}, \\
    \jump{\curl\bm{q}} &:= \curl \bm{q}|_{K^+}\times\un^ + + \curl\bm{q}|_{K^-}\times \un^- 
    , \quad \text{ for vector-valued function}, \\
    \aver{v} &:= \frac{1}{2}(v|_{K^+}  + v|_{K^-}), \quad \text{ for
    scalar-valued function}, \\
    \aver{\bm{q}} &:= \frac{1}{2}(\bm{q}|_{K^+} + \bm{q}|_{K^-}), 
    \quad \text{ for vector-valued function}.
  \end{aligned}
  \label{eq_traceopei}
\end{equation}
For $e \in \MEh^b$, we let $K \in \MTh$ such that $e \in \partial K$ 
and $\un$ is the unit out normal vector. We define 
\begin{equation}
  \begin{aligned}
    & \jump{\bm{q}} := \bm{q}|_{K}\times \un , \quad \jump{\curl\bm{q}} := \curl \bm{q}|_{K}\times \un
    \text{ for vector-valued function}, \\
    &\aver{v} := v|_{K}\quad \text{ for scalar-valued function}, \qquad
    \aver{\bm{q}} := \bm{q}|_{K} \quad \text{ for vector-valued
    function}.
  \end{aligned}
  \label{eq_traceopeb}
\end{equation}

Throughout this paper, $C$ and $C$ with subscripts denote the generic 
constants that may differ between lines but are independent of the 
mesh size.

\section{Reconstructed Discontinuous Space}
\label{sec_space}
In this section, we will introduce a linear reconstruction operator to
obtain a discontinuous approximation space for the given mesh $\MTh$.
The reconstructed space can achieve a high-order accuracy while the
number of degrees of freedom in each dimension remain the same as the number of
elements in $\MTh$.  The construction of the operator includes the
following steps.

Step 1. For each $K \in \MTh$, we construct an element patch $S(K)$,
which consists of $K$ itself and some surrounding elements. 
The size of the patch is controlled with a given threshold $\# S$. 
The construction of the element patch $S(K)$ is conducted by a
recursive algorithm. We begin by setting $S_0(K) = \{ K \}$, and
define $S_t(K)$ recursively: 
\begin{equation}
  S_t(K) = \bigcup_{K' \in S_{t-1}(K)} \bigcup_{ K'' \in \Delta(K')}
  K'', \quad t = 0,1, \dots
  \label{eq_patch}
\end{equation}
where $\Delta(K):= \{ K' \in \MTh \ | \  \overline{K} \cap \overline{K'}
\neq \emptyset \}$. The recursion stops once $t$ meets the condition 
that the cardinality $\# S_t(K) \geq \# S$, and we let the patch 
$S(K) := S_t(K)$. We apply the recursive algorithm \eqref{eq_patch} 
to all elements in $\MTh$.

Step 2. For each $K \in \MTh$, we solve a local least squares fitting
problem on the patch $S(K)$. We let $\bm{x}_K$ be the barycenter of
the element $K$ and mark barycenters of all elements as
collocation points.  
Let $I(K)$ be the set of collocation points located inside the domain
of $S(K)$,
\begin{displaymath}
  I(K) := \{ \bm{x}_{K'} \ |\ K' \in S(K) \},
\end{displaymath}
Let $U_h^0$ be the piecewise constant space with respect to $\MTh$,
\begin{displaymath}
  U_h^0 := \{ v_h \in L^2(\Omega) \ | \  v_h|_K \in \mb{P}_0(K), \  
  \forall K \in \MTh\}.
\end{displaymath}

and we denote by $\bmr{U}_h^0 := (U_h^0)^d$ the vector-valued
piecewise constant space. Given a function $\bm{g}_h \in \bmr{U}_h^0$
and an integer $m \geq 1$, for the element $K \in \MTh$,
we consider the following local constrained least squares problem: 
\begin{equation}
  \bm{p}_{S(K)} = \mathop{\arg \min}_{ \bm{q} \in
  \mb{P}_m(S(K))^d} \sum_{\bm{x} \in I(K)} \|\bm{q}(\bm{x}) -
  \bm{g}_h(\bm{x}) \|_{l^2}^2, \quad \text{s.t. } 
  \bm{q}(\bm{x}_K) = \bm{g}_h(\bm{x}_K), 
  \label{eq_lsproblem}
\end{equation}
where $I(K) := \{ \bm{x}_{\wt{K}} \ | \  \wt{K} \in S(K) \cap
\MTh\}$ denotes the set of collocation points located in $S(K)$,
and $\| \cdot \|_{l^2}$ denotes the discrete $l^2$ norm for vectors.
We make the following
geometrical assumption on the location of collocation points
\cite{Li2012efficient, Li2016discontinuous}: 
\begin{assumption}
  For any element patch $S(K)$ and any polynomial $p \in
  \mb{P}_m(S(K))$, $p|_{I(K) } = 0$ implies $p|_{S(K)} = 0$.
  \label{as_lsproblem}
\end{assumption}
\rrrevise{
The Assumption \ref{as_lsproblem} excludes the case that the points in
$I(K)$ are on an algebraic curve of degree $m$ and requires
cardinality $\# I(K) \geq \dim(\mb{P}_m(\cdot))$.} Under this
assumption, the fitting problem \eqref{eq_lsproblem} admits a unique
solution.
By solving \eqref{eq_lsproblem} and considering
the restriction $(\bm{p}_{S(K)})|_K$, we actually gain a local
polynomial defined on $K$. Since $\bm{p}_{S(K)}$ is sought in the
least squares sense, we note that $\bm{p}_{S(K)}$ depends linearly
on the given function $\bm{g}_h$. This property inspires us to define
a linear operator from the piecewise constant space $\bmr{U}_h^0$ to a
piecewise polynomial space with respect to $\MTh$.

We define a local reconstruction
operator $\mc{R}_K$ for elements in $\MTh$, 
\begin{displaymath}
  \begin{aligned}
    \mc{R}_K : \bmr{U}_h^0  &\rightarrow (\mb{P}_m(K))^d, \\
    \bm{g}_h & \rightarrow (\bm{p}_{S(K)})|_K, \\
  \end{aligned} \quad \forall K \in \MTh.
\end{displaymath}
Further, the linear reconstruction operator $\mc{R}$ for $\MTh$
is defined in a piecewise manner as 
\begin{displaymath}
  \begin{aligned}
    \mc{R} : \bmr{U}_h^0 & \rightarrow \bmr{U}_h^{m}, \\
    \bm{g}_h & \rightarrow \mc{R} \bm{g}_h, \\
  \end{aligned} \quad
  (\mc{R} \bm{g}_h)|_K := \mc{R}_K \bm{g}_h,   
  \quad \forall K \in \MTh, 
\end{displaymath}
where $\bmr{U}_h^{m}$ is the image space of the operator
$\mc{R}$. Clearly, by the operator $\mc{R}$, any
piecewise constant function $\bm{g}_h$ will be mapped into a piecewise
polynomial function $\mc{R} \bm{g}_h \in \bmr{U}_h^{m}$. Then, we
present more details to the space $\bmr{U}_h^{m}$, which are
piecewise polynomial spaces for the partition $\MTh$. 
For any element $K$, we pick up a function $\bm{w}_{K, j} \in
\bmr{U}_h^0$ such that 
\begin{displaymath}
  \bm{w}_{K, j}(\bm{x}) = \begin{cases}
    \bm{e}_j, & \bm{x} \in K, \\
    \bm{0}, & \text{otherwise}, \\
  \end{cases} \quad 1 \leq j \leq d, 
\end{displaymath}
where $\bm{e}_j$ is the unit vector in $\mb{R}^d$ whose $j$-th entry
is $1$. We let $\bm{\lambda}_{K, j} := \mc{R} \bm{w}_{K, j} (K \in
\MTh \leq j \leq d)$ and we state that the space
$\bmr{U}_h^{m}$ is spanned by $ \{ \bm{\lambda}_{K, j} \}$. 
\begin{lemma}
  the functions $ \{ \bm{\lambda}_{K, j} \}(K \in
  \MTh, 1 \leq j \leq d)$ are linearly independent
  and the space $\bmr{U}_h^{m} = \text{span}( \{ \bm{\lambda}_{K,
  j} \})$. 
  \label{le_lambdaid}
\end{lemma}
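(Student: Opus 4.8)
The plan is to reduce both assertions to a single fact: the linear reconstruction operator $\mc{R}\colon\bmr{U}_h^0\to\bmr{U}_h^m$ is injective. This is enough, because $\{\bm{w}_{K,j}\}_{K\in\MTh,\,1\le j\le d}$ is by construction a basis of $\bmr{U}_h^0$, so its image under the linear map $\mc{R}$, namely $\{\mc{R}\bm{w}_{K,j}\}=\{\bm{\lambda}_{K,j}\}$, always spans the image space $\bmr{U}_h^m=\mc{R}(\bmr{U}_h^0)$; and a spanning set that is the image of a basis is linearly independent exactly when the map has trivial kernel.

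The key observation for injectivity is the equality constraint $\bm{q}(\bm{x}_K)=\bm{g}_h(\bm{x}_K)$ imposed in the local fitting problem \eqref{eq_lsproblem}: it forces the reconstructed polynomial $\mc{R}_K\bm{g}_h=(\bm{p}_{S(K)})|_K$ to reproduce the constant value of $\bm{g}_h$ on $K$ at the barycenter, that is,
\begin{displaymath}
  (\mc{R}_K\bm{g}_h)(\bm{x}_K)=\bm{g}_h(\bm{x}_K)=\bm{g}_h|_K,\qquad\forall K\in\MTh.
\end{displaymath}
Here we use that \eqref{eq_lsproblem} is uniquely solvable, which is guaranteed by Assumption \ref{as_lsproblem}, so that $\mc{R}$ is a well-defined linear map. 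Now suppose $\mc{R}\bm{g}_h=0$. Then for every $K$ the polynomial $\mc{R}_K\bm{g}_h$ vanishes identically on $K$, so in particular $\bm{g}_h|_K=(\mc{R}_K\bm{g}_h)(\bm{x}_K)=\bm{0}$; since $K$ is arbitrary, $\bm{g}_h=\bm{0}$, and $\mc{R}$ is injective.

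To conclude, apply injectivity to $\bm{g}_h=\sum_{K,j}c_{K,j}\bm{w}_{K,j}$: if $\sum_{K,j}c_{K,j}\bm{\lambda}_{K,j}=\mc{R}\bm{g}_h=0$, then $\bm{g}_h=0$, and evaluating the piecewise constant function $\bm{g}_h$ on a fixed element $K$ gives $\sum_{j=1}^d c_{K,j}\bm{e}_j=\bm{0}$, hence $c_{K,j}=0$ for all $j$; letting $K$ range over $\MTh$ shows all coefficients vanish, which is the claimed linear independence. Together with the spanning remark this yields $\bmr{U}_h^m=\mathrm{span}(\{\bm{\lambda}_{K,j}\})$, and incidentally $\dim\bmr{U}_h^m=d\,\#\MTh$. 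I do not expect a genuine obstacle; the only point requiring care is to invoke the interpolation constraint built into \eqref{eq_lsproblem}, rather than any minimization property of the least-squares fit, since the latter on its own would not preclude a nontrivial kernel.
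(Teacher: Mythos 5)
Your proof is correct and rests on the same key fact as the paper's: the equality constraint $\bm{q}(\bm{x}_K)=\bm{g}_h(\bm{x}_K)$ in \eqref{eq_lsproblem}, evaluated at the barycenters, kills any element of the kernel. The paper phrases this directly as the delta property $\bm{\lambda}_{K,j}(\bm{x}_{K'})=\delta_{KK'}\bm{e}_j$ applied to a vanishing linear combination, while you package it as injectivity of $\mc{R}$ acting on the basis $\{\bm{w}_{K,j}\}$ of $\bmr{U}_h^0$; this is only an organizational difference, not a different argument.
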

\begin{proof}
  For any $K \in \MTh$, assume there exists a group of coefficients
  $\{a_{K, j}\}$ such that  
  \begin{equation}
    \sum_{K \in \MTh} \sum_{j = 1}^d a_{K, j}
    \bm{\lambda}_{K, j}(\bm{x}) = 0, \quad \forall \bm{x} \in
    \mb{R}^d.
    \label{eq_alambda}
  \end{equation}
  From the constraint to the problem \eqref{eq_lsproblem}, we have
  that 
  \begin{displaymath}
    \bm{\lambda}_{K, j}(\bm{x}_{K'}) = \begin{cases}
      \bm{e}_j, & K' = K, \\
      \bm{0}, & \text{otherwise}. \\
    \end{cases}
  \end{displaymath}
  We choose $\bm{x} = \bm{x}_K$ for all $K \in \MTh$ in
  \eqref{eq_alambda}, and it can be seen that $a_{K, j} = 0$. Thus,
  the functions $ \{ \bm{\lambda}_{K, j} \}(K \in \MTh, 1 \leq j
  \leq d)$ are linearly independent. Obviously, there holds
  $\bm{\lambda}_{K, j} \in \bmr{U}_h^{m}$, and we note that the
  dimension of $ \{ \bm{\lambda}_{K, j} \}$ is $d \# \MTh$. By the
  linear property of $\mc{R}$, we have that the space $\bmr{U}_h^{m}$
   is spanned by $ \{ \bm{\lambda}_{K, j} \}$. This completes the
  proof.
\end{proof}
Lemma~\ref{le_lambdaid} claims that the basis functions of
$\bmr{U}_h^{m}$ are indeed the group of functions $\{
\bm{\lambda}_{K, j} \}$, and for any function $\bm{g}_h \in
\bmr{U}_h^0$, one can explicitly write $\mc{R} \bm{g}_h$ as 
\begin{equation}
  \substitute{\mc{R} \bm{g}_h = \sum_{K \in \MTh} \sum_{j = 1}^d
  \bm{g_h}(\bm{x}_K) \bm{\lambda}_{K, j}.}
  {\mc{R} \bm{g}_h = \sum_{K \in \MTh} \sum_{j = 1}^d
  g_{h,j}(\bm{x}_K) \bm{\lambda}_{K, j}.}
  \label{eq_Rig}
\end{equation}
From the problem \eqref{eq_lsproblem}, the basis function
$\bm{\lambda}_{K, j}$ vanishes on the element $K'$ that $K \not\in
S(K')$.  This fact indicates $\bm{\lambda}_{K, j}$ has a finite
support set that $\text{supp}(\bm{\lambda}_{K, j}) = \{K' \in \MTh
\ | \ K \in S(K') \}$, and $\text{supp}(\bm{\lambda}_{K, j})$ is
different from the patch $S(K)$.

We extend the operator $\mc{R}$ to act on smooth
functions. For any $\bm{g} \in H^{m+1}(\Omega)^d$, we define a
piecewise constant function $\bm{g}_h \in \bmr{U}_h^0$ as 
\begin{displaymath}
  \bm{g}_h(\bm{x}_K) := \bm{g}(\bm{x}_K), \quad \forall K \in \MTh,
\end{displaymath}
and we directly define $\mc{R} \bm{g} := \mc{R} \bm{g}_h$. In this
way, any smooth function in $H^{m+1}(\Omega)^d$ is mapped into a
piecewise polynomial function with respect to $\MTh$ by the operator
$\mc{R}$. For any $\bm{g} \in H^{m+1}(\Omega)^d$, $\mc{R} \bm{g}$
can also be written as \eqref{eq_Rig}.

Next, we will focus on the approximation property of the operator
$\mc{R}$. We first define a constant $\Lambda_{m,K}$
for every element patch,
\begin{displaymath} 
  \Lambda^2_{m,K} := \max_{p \in
  \mb{P}_m(S(K))} \frac{\|p\|_{L^2}
  }{h^d\sum_{\bm{x}\in I(K)}p(\bm{x})^2 }.
\end{displaymath}
Assumption \ref{as_lsproblem} as well as the norm equivalence in finite dimensional spaces
 actually ensures $\Lambda_{m,K} <
\infty$.
\begin{lemma}
  For any element $K \in \MTh$, there holds
  \begin{equation}
    \| \mc{R}_K \bm{g} \|_{L^2(K)} \leq (1+2
    \Lambda_{m,K} \sqrt{\# I(K)})h^{d/2} \max_{\bm{x}\in I(K)} |\bm{g}|
    , \quad \forall \bm{g} \in H^{m+1}(\Omega)^d. 
    \label{eq_stability}
  \end{equation}
  \label{le_stability}
\end{lemma}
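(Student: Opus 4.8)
By definition $\mc{R}_K\bm{g}=(\bm{p}_{S(K)})|_K$, where $\bm{p}_{S(K)}$ is the constrained least squares minimizer of \eqref{eq_lsproblem} with $\bm{g}_h$ the piecewise constant interpolant of $\bm{g}$ (so $\bm{g}_h(\bm{x}_{K'})=\bm{g}(\bm{x}_{K'})$ for all $K'\in\MTh$). The plan is to peel off the constant vector that the interpolation constraint pins down and to treat the polynomial remainder through the constant $\Lambda_{m,K}$. Concretely, put $\bm{c}:=\bm{g}(\bm{x}_K)\in\mb{R}^d$ and $\bm{r}:=\bm{p}_{S(K)}-\bm{c}$; then $\bm{r}\in\mb{P}_m(S(K))^d$, and the constraint in \eqref{eq_lsproblem} gives $\bm{r}(\bm{x}_K)=\bm{0}$. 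By the triangle inequality it suffices to bound $\|\bm{c}\|_{L^2(K)}$ and $\|\bm{r}\|_{L^2(K)}$ separately.

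For the constant part, $\|\bm{c}\|_{L^2(K)}=|\bm{g}(\bm{x}_K)|\,|K|^{1/2}\le h^{d/2}\max_{\bm{x}\in I(K)}|\bm{g}|$, because $K$ lies in a ball of diameter $h_K\le h$ so that $|K|\le h^d$, and $\bm{x}_K\in I(K)$ since $K$ belongs to its own patch $S(K)$. This yields the leading ``$1$'' in the asserted constant. For the remainder, since $K\subseteq S(K)$ and $\bm{r}\in\mb{P}_m(S(K))^d$, applying the definition of $\Lambda_{m,K}$ componentwise gives
\begin{displaymath}
  \|\bm{r}\|_{L^2(K)} \le \|\bm{r}\|_{L^2(S(K))}
  \le \Lambda_{m,K}\, h^{d/2}\Big( \sum_{\bm{x}\in I(K)} \|\bm{r}(\bm{x})\|_{l^2}^2 \Big)^{1/2}.
\end{displaymath}
Thus everything reduces to estimating the discrete norm of $\bm{r}$ over the collocation set $I(K)$.

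This last step is the one requiring care, and here I would invoke the normal equations of the constrained fitting problem rather than the plain minimality inequality: the minimizer $\bm{p}_{S(K)}$ is characterized by the fact that the residual $(\bm{p}_{S(K)}-\bm{g}_h)|_{I(K)}$ is $l^2(I(K))$-orthogonal to the vector of values on $I(K)$ of \emph{every} polynomial in $\mb{P}_m(S(K))^d$ that vanishes at $\bm{x}_K$ --- in particular to $\bm{r}|_{I(K)}$. Hence $(\bm{c}-\bm{g}_h)|_{I(K)}=-\bm{r}|_{I(K)}+(\bm{p}_{S(K)}-\bm{g}_h)|_{I(K)}$ is an orthogonal decomposition in $l^2(I(K))$, and Pythagoras gives $\big(\sum_{\bm{x}\in I(K)}\|\bm{r}(\bm{x})\|_{l^2}^2\big)^{1/2}\le\big(\sum_{\bm{x}\in I(K)}\|\bm{g}(\bm{x})-\bm{g}(\bm{x}_K)\|_{l^2}^2\big)^{1/2}\le 2\sqrt{\#I(K)}\,\max_{\bm{x}\in I(K)}|\bm{g}|$, where the final inequality uses the crude pointwise bound $\|\bm{g}(\bm{x})-\bm{g}(\bm{x}_K)\|_{l^2}\le 2\max_{\bm{x}'\in I(K)}|\bm{g}|$. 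Substituting back, $\|\bm{r}\|_{L^2(K)}\le 2\Lambda_{m,K}\sqrt{\#I(K)}\,h^{d/2}\max_{\bm{x}\in I(K)}|\bm{g}|$, and adding the two contributions proves \eqref{eq_stability}.

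The main obstacle is precisely the control of $\|\bm{r}\|_{l^2(I(K))}$ with the sharp constant: using only $F(\bm{p}_{S(K)})\le F(\bm{c})$ (the constant vector $\bm{c}$ being an admissible competitor) together with the triangle inequality loses a factor and yields the constant $4$ in place of $2$, so one must exploit the Pythagorean form of the least squares optimality as above. All remaining ingredients are routine: the trace-free splitting $\bm{p}_{S(K)}=\bm{c}+\bm{r}$, the inclusion $K\subseteq S(K)$, the elementary volume bound $|K|\le h^d$, and the lift of the scalar constant $\Lambda_{m,K}$ to vector-valued polynomials. Well-posedness of \eqref{eq_lsproblem} and finiteness of $\Lambda_{m,K}$ under Assumption \ref{as_lsproblem} are already available.
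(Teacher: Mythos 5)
Your proof is correct and follows essentially the same route as the paper: the same splitting of $\bm{p}_{S(K)}$ into the constant $\bm{g}(\bm{x}_K)$ plus a remainder, the same key bound $\sum_{\bm{x}\in I(K)}\|\bm{p}(\bm{x})-\bm{g}(\bm{x}_K)\|_{l^2}^2\le\sum_{\bm{x}\in I(K)}\|\bm{g}(\bm{x})-\bm{g}(\bm{x}_K)\|_{l^2}^2$ extracted from the least-squares optimality, and the same use of $\Lambda_{m,K}$ on the remainder. Your Pythagorean phrasing of the normal equations is just a cleaner packaging of the paper's $\varepsilon$-perturbation plus Cauchy--Schwarz argument, which yields the identical inequality and the same constant.
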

\begin{proof}
  We define a polynomial space 
  \begin{displaymath}
    \wt{\mb{P}}_m(S(K))^d := \left\{ \bm{v} \in \mb{P}_m(S(K))^d
    \ | \  \bm{v}(\bm{x}_{K}) = \bm{g}(\bm{x}_{K}) \right\}.
  \end{displaymath}
  Clearly, any polynomial in $\wt{\mb{P}}_m(S(K))^d$ satisfies the
  constraint to \eqref{eq_lsproblem}. Since $\bm{p} :=
  \bm{p}_{S(K)}$ is the solution to \eqref{eq_lsproblem}, for any
  \substitute{$\varepsilon > 0$}{$\varepsilon \in \mb{R}$} and 
  any $\bm{q} \in \wt{\mb{P}}_m(S(K))^d$, we
  have that $\bm{p} + \varepsilon ( \bm{q} - \bm{g}(\bm{x}_{K})) 
  \in \wt{\mb{P}}_m(S(K))^d$ and 
  \begin{displaymath}
    \sum_{\bm{x} \in I(K)} \| \bm{p}(\bm{x}) + \varepsilon (
    \bm{q}(\bm{x}) - \bm{g}(\bm{x}_{K}))- \bm{g}(\bm{x})
    \|_{l^2}^{2} \geq \sum_{\bm{x} \in I(K)} \| \bm{p}(\bm{x}) -
    \bm{g}(\bm{x}) \|_{l^2}^{2}.
  \end{displaymath}
  Because $\varepsilon$ is arbitrary, the above inequality implies 
  \begin{displaymath}
    \sum_{\bm{x} \in I(K)} (\bm{p}(\bm{x}) - \bm{g}(\bm{x})) \cdot
    (\bm{q} (\bm{x}) - \bm{g}(\bm{x}_{K})) = 0.
  \end{displaymath}
  for any $\bm{q} \in \wt{\mb{P}}_m(S(K))^d$. By letting $\bm{q} =
  \bm{p}$ and applying Cauchy-Schwarz inequality, \substitute{we
  obtain that}{}
  \rrrevise{
    \begin{displaymath}
    \begin{aligned}
      0 &= \sum_{\bm{x} \in I(K)} (\bm{g}(\bm{x}) -
      \bm{g}(\bm{x}_{K}) + \bm{g}(\bm{x}_{K}) - \bm{p}(\bm{x}))
      \cdot (\bm{p}(\bm{x}) - \bm{g}(\bm{x}_{K}))
      \\
      & = \sum_{\bm{x} \in I(K)} \left( -\| \bm{p}(\bm{x}) -
      \bm{g}(\bm{x}_{K})\|^2_{l^2} + (\bm{p}(\bm{x}) -
      \bm{g}(\bm{x}_{K}))
      \cdot (\bm{g}(\bm{x}) - \bm{g}(\bm{x}_{K})) \right) \\
      & \leq -\frac{1}{2} \sum_{\bm{x} \in I(K)} \| \bm{p}(\bm{x}) -
      \bm{g}(\bm{x}_{K}) \|^2_{l^2} + \frac{1}{2} \sum_{\bm{x} \in I(K)}
      \| \bm{g}(\bm{x}) - \bm{g}(\bm{x}_{K}) \|^2_{l^2},
    \end{aligned}
  \end{displaymath}
  }
  \rrrevise{and we obtain}
  \begin{equation}
    \sum_{\bm{x} \in I(K)} \| \bm{p}(\bm{x}) -
    \bm{g}(\bm{x}_{K}) \|_{l^2}^2 \leq  \sum_{\bm{x} \in I(K)} 
    \| \bm{g}(\bm{x}) - \bm{g}(\bm{x}_{K}) \|_{l^2}^2.
    \label{eq_pgh1}
  \end{equation}
  Combining the definition of the constant$ \Lambda_{m,K}$, we get 
 
  \begin{displaymath}
    \begin{aligned}
      \| \bm{p}-\bm{g}(\bm{x}_K) \|_{L^2(K)} &\leq \Lambda^2_{m,K} h^d 
      \sum_{\bm{x} \in I(K)} \| \bm{p}(\bm{x}) -
    \bm{g}(\bm{x}_{K}) \|_{l^2}^2 \leq \Lambda^2_{m,K} h^d  \sum_{\bm{x} \in I(K)} 
    \| \bm{g}(\bm{x}) - \bm{g}(\bm{x}_{K}) \|_{l^2}^2\\
    &\le 4d \Lambda^2_{m,K} h^d \# I(K) \max_{\bm{x}\in I(K)} |\bm{g}|^2,
    \end{aligned}
  \end{displaymath}
  hence
  \begin{displaymath}
    \begin{aligned}
      \| \bm{p} \|_{L^2(K)} &\leq \| \bm{p}-\bm{g}(\bm{x}_K) \|_{L^2(K)}+h^{d/2}|\bm{g}(\bm{x}_K)|
    &\le (1+2
    \Lambda_{m,K} \sqrt{\# I(K)})h^{d/2} \max_{\bm{x}\in I(K)} |\bm{g}|
    \end{aligned}
  \end{displaymath}
  and completes the proof.
\end{proof}
\begin{assumption}
  For every element patch $S(K)(K \in \MTh)$,
  there exist constants $R$ and $r$ which are independent of $K$ such
  that $B_r \subset S(K) \subset B_R$, and $S(K)$ is star-shaped
  with respect to $B_r$, where $B_\rho$ is a disk with the radius
  $\rho$. 
  \label{as_patch}
\end{assumption}
From the stability result \eqref{eq_stability}, we can prove the
approximation results.  
\begin{lemma}
  For any $K \in \MTh $, there exists a constant $C$ such
  that 
  \begin{equation}
    \begin{aligned}
      \| \bm{g} - \mc{R} \bm{g} \|_{H^q(K)} &\leq C \Lambda_m h_K^{m
      + 1 - q} \| \bm{g} \|_{H^{m+1}(S(K))}, \quad 0 \leq q \leq m\\
    \end{aligned}
    \label{eq_localapproximation}
  \end{equation}
  for any $\bm{g} \in H^{m+1}(\Omega)^d$, where we set 
  \begin{equation}
    \Lambda_m :=  \max_{K \in \MTh} \left( 1 +
    \Lambda_{m,K} \sqrt{\# I(K) } \right).
    \label{eq_Lambdam}
  \end{equation}
  \label{eq_const}
\end{lemma}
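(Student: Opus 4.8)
The plan is to derive the approximation bound \eqref{eq_localapproximation} by comparing $\bm{g}$ with an auxiliary averaged Taylor polynomial on the patch, using the Bramble--Hilbert lemma and the stability estimate \eqref{eq_stability}. Let $\bm{\Pi}_m \bm{g} \in \mb{P}_m(S(K))^d$ denote the averaged Taylor polynomial of $\bm{g}$ of degree $m$ over $S(K)$, whose existence and polynomial-preserving property rely precisely on Assumption~\ref{as_patch} (star-shapedness with respect to a ball of comparable radius). The key observation is that $\mc{R}_K$ is \emph{linear} and reproduces polynomials of degree $\le m$: indeed, if $\bm{q} \in \mb{P}_m(\cdot)^d$ then the piecewise-constant data $(\bm{q}(\bm{x}_{\wt K}))_{\wt K}$ is fitted exactly on $I(K)$ with zero residual and also satisfies the interpolation constraint at $\bm{x}_K$, so by uniqueness of the minimiser (guaranteed by Assumption~\ref{as_lsproblem}) one has $\mc{R}_K \bm{q} = \bm{q}|_K$. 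Hence we may write on $K$
\begin{displaymath}
  \bm{g} - \mc{R}_K \bm{g} = (\bm{g} - \bm{\Pi}_m \bm{g}) - \mc{R}_K(\bm{g} - \bm{\Pi}_m \bm{g}),
\end{displaymath}
which reduces everything to estimating the two terms on the right separately.

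For the first term, the standard Bramble--Hilbert / Dupont--Scott estimate on the star-shaped domain $S(K)$ (with $B_r \subset S(K) \subset B_R$ and $h_K \simeq \operatorname{diam}(S(K))$ by Assumption~\ref{as_patch} and quasi-uniformity) gives
\begin{displaymath}
  \| \bm{g} - \bm{\Pi}_m \bm{g} \|_{H^q(K)} \le \| \bm{g} - \bm{\Pi}_m \bm{g} \|_{H^q(S(K))} \le C h_K^{m+1-q} | \bm{g} |_{H^{m+1}(S(K))}.
\end{displaymath}
For the second term, I would first apply the inverse inequality \eqref{eq_inverse} to pull down the $H^q(K)$ norm of the polynomial $\mc{R}_K(\bm{g}-\bm{\Pi}_m\bm{g})$ to an $L^2(K)$ norm at the cost of $h_K^{-q}$, then apply the stability bound \eqref{eq_stability} to get
\begin{displaymath}
  \| \mc{R}_K(\bm{g} - \bm{\Pi}_m \bm{g}) \|_{L^2(K)} \le (1 + 2\Lambda_{m,K}\sqrt{\# I(K)})\, h^{d/2} \max_{\bm{x} \in I(K)} | (\bm{g} - \bm{\Pi}_m \bm{g})(\bm{x}) |.
\end{displaymath}
The remaining job is to bound the pointwise maximum of $\bm{g}-\bm{\Pi}_m\bm{g}$ over the collocation points in terms of $\| \bm{g} \|_{H^{m+1}(S(K))}$. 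Here I would use a scaled trace/Sobolev embedding (for $m+1 > d/2$, or more carefully via the $H^{m+1}$ regularity assumed) combined with the Bramble--Hilbert estimate on each sub-element: on any $\wt K \in S(K)$, $|(\bm{g}-\bm{\Pi}_m\bm{g})(\bm{x}_{\wt K})| \le C h_K^{-d/2}\| \bm{g}-\bm{\Pi}_m\bm{g}\|_{L^\infty(S(K))}$-type scaling, eventually absorbing the $h^{d/2}$ factor and producing $C h_K^{m+1} | \bm{g} |_{H^{m+1}(S(K))}$ (or $\| \bm{g}\|_{H^{m+1}(S(K))}$), so that after the $h_K^{-q}$ from the inverse inequality the exponent matches $m+1-q$.

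The main obstacle I anticipate is the pointwise term: controlling $\max_{\bm{x} \in I(K)} |(\bm{g}-\bm{\Pi}_m\bm{g})(\bm{x})|$ rigorously requires either a Sobolev embedding $H^{m+1}(S(K)) \hookrightarrow C^0(\overline{S(K)})$ with a constant that scales correctly in $h_K$, or a more delicate argument passing through the averaged Taylor remainder in pointwise form; one must be careful that the embedding constant and the scaling powers of $h$ combine to give exactly $h_K^{m+1}$ and not a suboptimal power. A clean way to handle this is to note $d \le 3 \le m+1$ in the relevant regime (or to invoke the assumed $H^{m+1}$ regularity with $m \ge 1$), apply a scaling argument mapping $S(K)$ to a reference patch of unit size where the embedding constant is fixed, and track the Jacobian factors. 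Once this pointwise bound is in place, collecting the two contributions and using $\# I(K)$, $\Lambda_{m,K}$ bounded uniformly through the definition \eqref{eq_Lambdam} of $\Lambda_m$ yields \eqref{eq_localapproximation}; summing or taking maxima over $K$ is then routine. Finally, I would remark that the constant $C$ depends only on $m$, $d$, the shape-regularity $\nu$, and the patch constants $R/r$, $\# S$, but not on $h$.
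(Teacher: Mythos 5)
Your proposal is correct and follows essentially the same route as the paper's proof: an auxiliary best-approximation polynomial on the star-shaped patch (the paper simply asserts its existence with the $H^q$ and $L^\infty$ bounds that your averaged Taylor polynomial provides), polynomial reproduction plus linearity of $\mc{R}_K$ to reduce to the stability estimate \eqref{eq_stability}, cancellation of the $h^{d/2}$ factor against the $L^\infty$ remainder bound, and the inverse inequality for $q>0$. Your explicit justification of the reproduction property $\mc{R}_K\bm{q}=\bm{q}|_K$ and of the scaling of the Sobolev embedding constant fills in details the paper leaves implicit, but the argument is the same.
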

\begin{proof}
  According to Assumption\ref{as_patch}, there exists a polynomial $\bm{p}\in\mb{P}_m(S(K))^d$, such that
  \begin{displaymath}
    \begin{aligned}
      \| \bm{g} - \bm{p} \|_{H^q(S(K))} &\leq h_K^{m+ 1 - q} \| \bm{g} \|_{H^{m+1}(S(K))}\\
      \| \bm{g} - \bm{p} \|_{L^\infty(S(K))} &\leq h_K^{m+ 1 - d/2} \| \bm{g} \|_{H^{m+1}(S(K))}\\
    \end{aligned}
  \end{displaymath}
  Combining the stablility result of the reconstruction operator, we have
  \begin{displaymath}
    \begin{aligned}
      \| \bm{g} - \mc{R}\bm{g} \|_{L^2(K)} &\leq \| \bm{g} - \bm{p} \|_{L^2(K)} +\| \bm{p} - \mc{R}\bm{g} \|_{L^2(K)} \\
   &\leq  \| \bm{g} - \bm{p} \|_{L^2(K)}+(1+2
   \Lambda_{m,K} \sqrt{\# I(K)})h^{d/2} \max_{\bm{x}\in I(K)}|\bm{p}(\bm{x})-\bm{g}(\bm{x})|
    \end{aligned}
  \end{displaymath}
  By the arbitraryness of $\bm{p}$,
  \begin{displaymath}
    \begin{aligned}
      \| \bm{g} - \mc{R}\bm{g} \|_{L^2(K)} &\leq Ch^{m+1}\Lambda_m\| \bm{g} \|_{H^{m+1}(S(K))}
    \end{aligned}
  \end{displaymath}
  the case $q>0$ follows from the inverse inequality
  \begin{displaymath}
    \begin{aligned}
      \| \bm{g} - \mc{R}\bm{g} \|_{H^q(K)} &\leq \| \bm{g} - \bm{p} \|_{H^q(K)} +\| \bm{p} - \mc{R}\bm{g} \|_{H^q(K)} \\
   &\leq  Ch^{m+1-q}\| \bm{g} \|_{H^{m+1}(K)}+ Ch^{-q}\| \bm{p} - \mc{R}\bm{g} \|_{L^2(K)}\\
   &\leq  C\Lambda_m h^{m+1-q}\| \bm{g} \|_{H^{m+1}(S(K))}
    \end{aligned}
  \end{displaymath}
\end{proof}
From \eqref{eq_localapproximation}, it can be seen that the operator
$\mc{R}$ has an approximation error of degree $O(h^{m+1-q})$ under
the case that $\Lambda_m$ admits \substitute{a}{an} upper bound 
independent of the mesh size $h$. However, it is not trivial to bound 
the constant $\Lambda_m$, see Remark \ref{re_Lambda}. We can 
prove that under some mild conditions on the element patch, the 
constant admits a uniform upper bound. 

\begin{remark}
  There is a constant $\Lambda(m,S(K))$ defined in \cite{Li2016discontinuous} as
  \begin{displaymath} 
    \Lambda(m, S(K)) := \max_{p \in
    \mb{P}_m(S(K))} \frac{\max_{\bm{x} \in S(K)} |p(\bm{x})|
    }{\max_{\bm{x} \in I(K)} |p(\bm{x})| }.
  \end{displaymath}
  It can be shown that $\Lambda_{m,K} \le \Lambda(m,S(K))$. Hence when the constant $\Lambda(m,S(K))$
  is uniformly bounded, so is $\Lambda_{m,K}$.
  For the patch $S(K)$, consider the special case that the
  corresponding collocation points set $ I(K)$ has that $\# I(K) =
  \dim(\mb{P}_m(\cdot))$, then the constant $\Lambda(m, S(K))$ is
  equal to the Lebesgue constant \cite{Powell1981approximation} and
  the solution to the least squares problem \eqref{eq_lsproblem} is
  just the Lagrange interpolation polynomial, however, in this case the constant
  $\Lambda(m, S(K))$ may depend on $h$ and grow very fast, which will
  further damage the convergence. To ensure the stability of the reconstruction
  operator, we are acquired to
  choose a sufficiently large $\# S$ so that the constant
  $\Lambda(m, S(K))$ admits a uniform upper bound, thus. We refer to 
  \cite[Lemma 6]{Li2016discontinuous} and 
  \cite[Lemma 3.4]{Li2012efficient} for the details of this statement.
  The wide element patch will bring more computational cost
  for filling the stiffness matrix and increase the width of the
  banded structure. This is just the price we have to pay for the
  uniform bound of $\Lambda(m, S(K))$.
  
  \label{re_Lambda}
\end{remark}

\section{Approximation to Fourth-order Curl Problem}
\label{sec_curl_problem}
We define the approximation problem to solve the
problem \eqref{eq_quadcurl} based on the space $\bmr{U}_h^m$ constructed in
the previous section: Seek $\bm{u_h} \in \bmr{U}_h^m$ ($m\geq 2$) such that
\begin{equation}
  B_h(\bm{u_h}, \bm{v_h}) = l_h(\bm{v_h}), \quad \forall \bm{v_h} \in \bmr{U}_h^m,
  \label{eq_variational}
\end{equation}
where we define the bilinear form $B_h(\cdot, \cdot)$ on
$ U_h := \bmr{U}_h^m + H(\curl^2,\Omega)\cap\{\bm{v}:\| \curl^j\bm{v}\|_{1/2+\sigma,\Omega}<\infty, 0\le j\le 3\}$,
as in \cite{zhimin2023}.
\begin{displaymath}
  \begin{aligned}
    B_h(\bm{u}, \bm{v}) &:= \sum_{K \in \MTh} \int_{K} \curl ^{2}\bm{u} \cdot \curl ^{2}\bm{v}\
     \d{\bm{x}} + \sum_{K \in \MTh} \int_{K} \bm{u} \cdot \bm{v}\
     \d{\bm{x}} 
     \\
     &+ \sum_{e \in \MEh} \int_{e} 
    \left( \jump{\bm{u}} \cdot \aver{\curl^{3} \bm{v}} + 
    \jump{\curl \bm{u}} \cdot \aver{\curl^{2}\bm{v}} \right) \d{\bm{s}}
    + \sum_{e \in \MEh} \int_{e} 
    \left( \jump{\bm{v}} \cdot \aver{\curl^{3} \bm{u}} + 
    \jump{\curl \bm{v}} \cdot \aver{\curl^{2}\bm{u}} \right) \d{\bm{s}}
    \\
    &+ \sum_{e \in \MEh} \int_{e} \left(
    \mu_1 \jump{ \bm{u}} \cdot \jump{\bm{v}} + \mu_2 \jump{\curl \bm{u}}\cdot
    \jump{\curl \bm{v}} \right) \d{\bm{s}} 
  \end{aligned}
\end{displaymath}
The parameter $\mu_1$ and $\mu_2$ are positive penalties which are set
by
\begin{displaymath}
  \begin{aligned}
    \mu_1|_e = \frac{\eta}{h_e^3}, \quad \mu_2|_e =
    \frac{\eta}{h_e}. \quad \text{ for } e \in \MEh.
  \end{aligned}
\end{displaymath}
The linear form $l_h(\cdot)$ is defined for $\bm{v} \in U_h$,
\begin{displaymath}
  \begin{aligned}
    l_h(\cdot) &:= \sum_{K \in \MTh} \int_K \bm{f}\cdot\bm{v} \d{\bm{x}} 
    &+ \sum_{e \in \MEh^b} \int_{e} \left( g_1 \cdot(\curl^{3}\bm{v_h}
     + \mu_1 \bm{v_h}\times \un) + g_2 \cdot(\curl^{2}\bm{v_h} + \mu_2 \curl\bm{v_h}\times\un) \right)
    \d{\bm{s}}.\\
  \end{aligned}
\end{displaymath}
We introduce two energy norms for the space $U_h$,
\begin{displaymath}
  \begin{aligned}
    \DGnorm{\bm{u}}^2 &:= \sum_{K \in \MTh} \int_{K} | \bm{u}
    |^2 \d{\bm{x}} +\sum_{K \in \MTh} \int_{K} | \curl^{2} \bm{u}
  |^2 \d{\bm{x}} + 
   \sum_{e \in \MEh} \int_{e} h_K^{-3} |
  \jump{\bm{u}} |^2 \d{\bm{s}} 
  + \sum_{e \in \MEh} \int_{e} h_K^{-1} | \jump{\curl \bm{u}} |^2
  \d{\bm{s}},
  \end{aligned}
\end{displaymath}
and
\begin{displaymath}
  \begin{aligned}
    \DGenorm{\bm{u}}^2 &:= \DGnorm{\bm{u}}^2 + \sum_{e \in \MEh} \int_{e} h_e^3 | \aver{\curl^{3} \bm{u}} |^2 \d{\bm{s}} 
    + \sum_{e \in \MEh} \int_{e} h_e | \aver{\curl^{2} \bm{u}} |^2
    \d{\bm{s}} .
  \end{aligned}
\end{displaymath}

We claim that the two energy norms are equivalent over the space 
$\bmr{U}_h^m$.
\begin{lemma}
  For any $\bm{u_h} \in \bmr{U}_h^m$, there exists a constant $C$, such that
  \begin{equation}
    \DGnorm{\bm{u_h}} \leq \DGenorm{\bm{u_h}} \leq C \DGnorm{\bm{u_h}}.
    \label{eq_normEq}
  \end{equation}
  \label{le_normEq}
\end{lemma}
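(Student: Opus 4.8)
The inequality $\DGnorm{\bm{u_h}} \le \DGenorm{\bm{u_h}}$ is trivial: by construction $\DGenorm{\bm{u_h}}^2$ is exactly $\DGnorm{\bm{u_h}}^2$ plus the two nonnegative face contributions $\sum_{e \in \MEh}\int_e h_e^3|\aver{\curl^3 \bm{u_h}}|^2\,\d{\bm{s}}$ and $\sum_{e \in \MEh}\int_e h_e|\aver{\curl^2 \bm{u_h}}|^2\,\d{\bm{s}}$. Hence the content of the lemma is the reverse estimate, and it reduces to bounding these two extra terms by $C\DGnorm{\bm{u_h}}^2$.

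The plan is to exploit that $\bm{u_h} \in \bmr{U}_h^m$ is an elementwise polynomial, so $\curl^2\bm{u_h}|_K$ and $\curl^3\bm{u_h}|_K$ are polynomials (of degree at most $m-2$ and $m-3$ respectively, and identically zero in the degenerate low-degree cases), which makes the inverse inequality \eqref{eq_inverse} available on each $K$. The first preparatory step is to record the polynomial trace inequality $\|v\|_{L^2(\partial K)}^2 \le C h_K^{-1}\|v\|_{L^2(K)}^2$ for $v\in\mb{P}_l(K)$, obtained by inserting \eqref{eq_inverse} into the trace inequality \eqref{eq_trace}. For the second face term, on an interior face $e = \overline{K^+}\cap\overline{K^-}$ I would use $|\aver{\bm{w}}|^2\le\tfrac12(|\bm{w}|_{K^+}|^2+|\bm{w}|_{K^-}|^2)$ with $\bm{w}=\curl^2\bm{u_h}$, the polynomial trace inequality on each side, and $h_e\le h_K$ from quasi-uniformity, to get
\[
  \int_e h_e\,|\aver{\curl^2\bm{u_h}}|^2\,\d{\bm{s}} \le C\sum_{K\in\{K^+,K^-\}} h_K\,h_K^{-1}\|\curl^2\bm{u_h}\|_{L^2(K)}^2 = C\sum_{K\in\{K^+,K^-\}}\|\curl^2\bm{u_h}\|_{L^2(K)}^2,
\]
with the obvious one-sided modification on boundary faces.

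For the $\curl^3$ term I would first chain an inverse step with the fact that $\curl$ is a bounded combination of first-order partials: $\|\curl^3\bm{u_h}\|_{L^2(K)} = \|\curl(\curl^2\bm{u_h})\|_{L^2(K)} \le C\|\nabla(\curl^2\bm{u_h})\|_{L^2(K)} \le C h_K^{-1}\|\curl^2\bm{u_h}\|_{L^2(K)}$, the last step by \eqref{eq_inverse} for the polynomial $\curl^2\bm{u_h}$. Then the polynomial trace inequality gives $\|\curl^3\bm{u_h}\|_{L^2(\partial K)}^2 \le C h_K^{-3}\|\curl^2\bm{u_h}\|_{L^2(K)}^2$, so that $\int_e h_e^3|\aver{\curl^3\bm{u_h}}|^2\,\d{\bm{s}} \le C\sum_{K\in\{K^+,K^-\}}\|\curl^2\bm{u_h}\|_{L^2(K)}^2$ using $h_e\le h_K$ once more. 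Summing both estimates over $e\in\MEh$ and invoking shape-regularity (each element borders a uniformly bounded number of faces) bounds the two extra terms by $C\sum_{K\in\MTh}\|\curl^2\bm{u_h}\|_{L^2(K)}^2 \le C\DGnorm{\bm{u_h}}^2$; adding $\DGnorm{\bm{u_h}}^2$ yields $\DGenorm{\bm{u_h}}^2 \le C\DGnorm{\bm{u_h}}^2$. I do not anticipate a genuine obstacle here: the only care needed is keeping the powers of $h$ consistent through the trace/inverse/trace chain, and observing that the degenerate cases where $\curl^2\bm{u_h}$ or $\curl^3\bm{u_h}$ is constant or vanishes are covered by exactly the same bounds.
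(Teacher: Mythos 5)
Your argument is correct and follows essentially the same route as the paper: bound the face averages of $\curl^2\bm{u_h}$ and $\curl^3\bm{u_h}$ by one-sided traces, then chain the trace inequality \eqref{eq_trace} with the inverse inequality \eqref{eq_inverse} on each neighbouring element to reduce everything to $\sum_K\|\curl^2\bm{u_h}\|_{L^2(K)}^2$. The powers of $h$ in your trace/inverse/trace chain all check out, so there is nothing to add.
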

\begin{proof}
  We only need to prove $\DGenorm{\bm{u_h}} \leq C \DGnorm{\bm{u_h}}$. For $e
  \in \MEh^i$, we denote the two neighbor elements of $e$ by $K^+$ and
  $K^-$. We have 
  \begin{displaymath}
    \| h_e^{3/2} \aver{\curl^{3} \bm{u_h}} \|_{L^2(e)}
    \leq C  \left(\| h_e^{3/2} \curl^{3} \bm{u_h}
    \|_{L^2(e \cap \partial K^+)} + \| h_e^{3/2} \curl^{3} \bm{u_h}
    \|_{L^2(e \cap \partial K^-)} \right) 
  \end{displaymath}
  By the trace inequalities \eqref{eq_trace},
  and the inverse inequality \eqref{eq_inverse}, we obtain that
  \begin{displaymath}
    \|h_e^{3/2} \curl^{3} \bm{u_h}\|_{L^2(e \cap \partial K^{\pm})}
    \leq 
      C \| \curl^{2} \bm{u_h} \|_{L^2(K^{\pm})}
  \end{displaymath}
  For $e \in \MEh^b$, let $e \subset K$. Similarly, we have
  \begin{displaymath}
    \|h_e^{3/2} \curl^{3} \bm{u_h}\|_{L^2(e \cap \partial K)}
    \leq 
    C \| \curl^{2} \bm{u_h} \|_{L^2(K)}
  \end{displaymath}
  
  The term $\| h_e^{1/2} \aver{\curl^{2} \bm{u_h}} \|_{L^2(e)}$ can bounded by the 
  same way. Thus, by summing over all $e \in \MEh$ ,
  we conclude that
  \begin{displaymath}
    \begin{aligned}
    \sum_{e \in \MEh} \| h_e^{1/2} \aver{\curl^{2} \bm{u_h}} \|_{L^2(e)}^2 
    + \sum_{e \in \MEh} \| h_e^{3/2} \aver{\curl^{3} \bm{u_h}}
    \|_{L^2(e)}^2  \leq C \sum_{K \in \MTh} \| \curl^{2} \bm{u_h} \|_{L^2(K)}^2,
    \end{aligned}
  \end{displaymath}
  which can immediately leads us to \eqref{eq_normEq} and completes
  the proof.
\end{proof}
Now we are ready to prove the coercivity and the continuity of the
bilinear form $B_h(\cdot, \cdot)$.
\begin{theorem}
  Let $B_h(\cdot, \cdot)$ be the bilinear form with sufficiently large
  penalty $\eta$. Then there exists a positive constant $C$ such that 
  \begin{equation}
    B_h(\bm{u_h}, \bm{u_h}) \geq C \DGenorm{\bm{u_h}}^2, \quad \forall \bm{u_h} \in \bmr{U}_h^m.
    \label{eq_coercivity}
  \end{equation}
  \label{th_coercivity}
\end{theorem}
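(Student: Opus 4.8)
The plan is to establish the coercivity estimate \eqref{eq_coercivity} by starting from the definition of $B_h(\bm{u_h}, \bm{u_h})$, isolating the ``good'' terms, and controlling the indefinite consistency terms via a Cauchy--Schwarz plus Young's inequality argument, absorbing the dangerous pieces into the penalty terms at the cost of a sufficiently large $\eta$. Writing out $B_h(\bm{u_h},\bm{u_h})$, the volume terms give exactly $\sum_K \|\bm{u_h}\|_{L^2(K)}^2 + \sum_K \|\curl^2\bm{u_h}\|_{L^2(K)}^2$, the two symmetric face sums combine to $2\sum_e \int_e (\jump{\bm{u_h}}\cdot\aver{\curl^3\bm{u_h}} + \jump{\curl\bm{u_h}}\cdot\aver{\curl^2\bm{u_h}})\,\d{\bm{s}}$, and the penalty sum gives $\sum_e \int_e (\mu_1|\jump{\bm{u_h}}|^2 + \mu_2|\jump{\curl\bm{u_h}}|^2)\,\d{\bm{s}}$, which with $\mu_1|_e = \eta h_e^{-3}$, $\mu_2|_e = \eta h_e^{-1}$ is precisely $\eta$ times the jump portion of $\DGnorm{\cdot}^2$.

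For the indefinite cross terms, I would bound, for each face $e$,
\begin{displaymath}
  \left| \int_e \jump{\bm{u_h}}\cdot\aver{\curl^3\bm{u_h}}\,\d{\bm{s}} \right|
  \le \|h_e^{-3/2}\jump{\bm{u_h}}\|_{L^2(e)} \, \|h_e^{3/2}\aver{\curl^3\bm{u_h}}\|_{L^2(e)}
  \le \frac{\delta}{2}\|h_e^{-3/2}\jump{\bm{u_h}}\|_{L^2(e)}^2 + \frac{1}{2\delta}\|h_e^{3/2}\aver{\curl^3\bm{u_h}}\|_{L^2(e)}^2,
\end{displaymath}
and similarly for the $\jump{\curl\bm{u_h}}\cdot\aver{\curl^2\bm{u_h}}$ term with weights $h_e^{1/2}$ and $h_e^{-1/2}$. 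Summing over $e$ and invoking Lemma~\ref{le_normEq} (the discrete trace/inverse estimate that bounds $\sum_e \|h_e^{3/2}\aver{\curl^3\bm{u_h}}\|_{L^2(e)}^2 + \sum_e\|h_e^{1/2}\aver{\curl^2\bm{u_h}}\|_{L^2(e)}^2$ by $C\sum_K\|\curl^2\bm{u_h}\|_{L^2(K)}^2$), the $1/(2\delta)$ pieces become $\tfrac{C}{2\delta}\sum_K\|\curl^2\bm{u_h}\|_{L^2(K)}^2$. Choosing $\delta$ large enough (say $\delta = 4C$) makes this absorbable into at most half of the volume $\curl^2$ term, while the $\tfrac{\delta}{2}$ pieces are absorbed into the penalty terms provided $\eta \ge 2\cdot\tfrac{\delta}{2} + 1 = \delta + 1$, say. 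This yields $B_h(\bm{u_h},\bm{u_h}) \ge c(\sum_K\|\bm{u_h}\|_{L^2(K)}^2 + \sum_K\|\curl^2\bm{u_h}\|_{L^2(K)}^2 + \sum_e\|h_e^{-3/2}\jump{\bm{u_h}}\|_{L^2(e)}^2 + \sum_e\|h_e^{-1/2}\jump{\curl\bm{u_h}}\|_{L^2(e)}^2) = c\,\DGnorm{\bm{u_h}}^2$, and then Lemma~\ref{le_normEq} upgrades $\DGnorm{\bm{u_h}}^2$ to $\DGenorm{\bm{u_h}}^2$ up to a constant, giving \eqref{eq_coercivity}.

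The one subtlety I would be careful about is the matching of the mesh-size weights: the norm $\DGnorm{\cdot}$ uses $h_K^{-3}$ and $h_K^{-1}$ on faces whereas the penalties and the $\DGenorm{\cdot}$ terms use $h_e^{-3}$, $h_e^{-1}$ (and $h_e^3$, $h_e$); by quasi-uniformity of $\MTh$ these are all comparable up to a fixed constant, so the bookkeeping goes through, but it should be noted explicitly. The main obstacle — really the only place where something nontrivial happens — is obtaining the discrete bound on the averaged normal-type traces $\aver{\curl^3\bm{u_h}}$ and $\aver{\curl^2\bm{u_h}}$ in terms of the volume $\curl^2$ norm; this is exactly the content of Lemma~\ref{le_normEq}, which has already been proven, so here it is simply cited. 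Everything else is the standard symmetric IPDG coercivity template: split, Cauchy--Schwarz, Young, absorb, and invoke norm equivalence.
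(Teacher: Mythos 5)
Your proposal is correct and follows essentially the same route as the paper: reduce to coercivity in $\DGnorm{\cdot}$ via the norm equivalence of Lemma~\ref{le_normEq}, bound the doubled symmetric consistency terms by Cauchy--Schwarz and Young's inequality, control the averaged traces $\aver{\curl^3\bm{u_h}}$, $\aver{\curl^2\bm{u_h}}$ by $\sum_K\|\curl^2\bm{u_h}\|_{L^2(K)}^2$ through the trace and inverse inequalities, and absorb the remainders by choosing the Young parameter and then $\eta$ sufficiently large. The only cosmetic difference is that the paper re-derives the trace/inverse bound inside the proof with a parameter $\epsilon$ playing the reciprocal role of your $\delta$, whereas you cite the content of Lemma~\ref{le_normEq} directly; your remark on the $h_K$ versus $h_e$ weights being comparable by quasi-uniformity is a point the paper glosses over.
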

\begin{proof}
  From Lemma \ref{le_normEq}, we only need to establish the coercivity
  over the norm $\DGnorm{\cdot}$. For the face $e \in \MEh^i$, let
  $e$ be shared by the neighbor elements $K^-$ and $K^+$. We apply the
  Cauchy-Schwarz inequality,
  \begin{displaymath}
    \begin{aligned}
      -\int_{e} 2\jump{\bm{u_h}} \cdot &\aver{\curl^{3}
      \bm{u_h}} \d{\bm{s}} \geq -\frac{1}{\epsilon}\|h_e^{-3/2}
      \jump{\bm{u_h}}\|^2_{L^2(e)} - \epsilon \|h_e^{3/2} 
      \aver{\curl^{3} \bm{u_h}} \|^2_{L^2(e)} \\
      &\geq  -\frac{1}{\epsilon}\|h_e^{-3/2}
      \jump{\bm{u_h}} \|^2_{L^2(e)} -\frac{\epsilon}{2} \| h_e^{3/2} \curl^{3} \bm{u_h}
       \|^2_{L^2(e \cap \partial K^-)} - \frac{\epsilon}{2} \|
      h_e^{3/2} \curl^{3} \bm{u_h} \|^2_{L^2(e \cap \partial
      K^+)} ,
    \end{aligned}
  \end{displaymath}
  for any $\epsilon > 0$. From the trace inequalities
  \eqref{eq_trace}, and the inverse inequality
  \eqref{eq_inverse}, we deduce that
  \begin{displaymath}
    \|h_e^{3/2}  \curl^{3} \bm{u_h}\|_{L^2(e^i \cap \partial K^{\pm})}
    \leq 
      C \|  \curl^{2} \bm{u_h} \|_{L^2(K^{\pm})}
  \end{displaymath}
  Thus, we have
  \begin{equation}
    -\sum_{e \in \MEh^i} \int_{e^0 \cup e^1} 2 \jump{\bm{u_h}} \cdot
    \aver{ \curl^{3} \bm{u_h}} \d{\bm{s}} \geq -\sum_{e \in
    \MEh^i} \frac{1}{\epsilon} \| h_e^{-3/2} \jump{\bm{u_h}}
    \|^2_{L^2(e^0 \cup e^1)} - C\epsilon \sum_{K \in \MTh} \|  \curl^{2} \bm{u_h}
    \|^2_{L^2(K^0 \cup K^1)}.
    \label{eq_eo1}
  \end{equation}
  For the face $e \in \MEh^b$, we can similarly derive that
  \begin{equation}
    -\sum_{e \in \MEh^b} \int_{e^0 \cup e^1} 2 \jump{\bm{u_h}} \cdot
    \aver{\curl^{3}\bm{u_h}} \d{\bm{s}} \geq -\sum_{e \in
    \MEh^b} \frac{1}{\epsilon} \| h_e^{-3/2} \jump{\bm{u_h}}
    \|^2_{L^2(e)} - C\epsilon \sum_{K \in \MTh} \| \curl^{2} \bm{u_h}
    \|^2_{L^2(K)}.
    \label{eq_eb1}
  \end{equation}

  By employing the same method to the term $\int_{e^0 \cup e^1} 2 
  \jump{ \curl \bm{u_h}} \aver{ \curl^{2} \bm{u_h}} \d{\bm{s}}$ , we can obtain that
  \begin{equation}
    -\sum_{e \in \MEh} \int_{e} 2 \jump{\curl \bm{u_h}}
    \aver{\curl^{2} \bm{u_h}} \d{\bm{s}} \geq -\frac{1}{\epsilon} \sum_{e
    \in \MEh} \| h_e^{-1/2} \jump{\curl \bm{u_h}} \|^2_{L^2(e)}
    - C\epsilon \sum_{K \in \MTh} \| \curl^{2}\bm{u_h} \|^2_{L^2(K)},
    \label{eq_eob2}
  \end{equation}

  Combining the inequalities \eqref{eq_eo1}, \eqref{eq_eb1},
  \eqref{eq_eob2},  we conclude that
  there exists a constant $C$ such that
  \begin{displaymath}
    \begin{aligned}
      B_h(\bm{u_h}, \bm{u_h}) &\geq (1 - C\epsilon) 
      \sum_{K \in \MTh} \|  \curl^{2} \bm{u_h}\|^2_{L^2(K)} +\sum_{K \in \MTh} \|\bm{u_h}\|^2_{L^2(K)}\\
    &+ (\eta - \frac{1}{\epsilon}) \sum_{e
    \in \MEh} (\| h_e^{-1/2} \jump{ \curl \bm{u_h}} \|^2_{L^2(e)} 
    + \| h_e^{-3/2} \jump{\bm{u_h}} \|^2_{L^2(e)}) 
    \end{aligned}
  \end{displaymath}
  for any $\epsilon > 0$. We can let $\epsilon = 1/(2C)$ and
  select a sufficiently large $\eta$ to ensure $B_h(\bm{u_h}, \bm{u_h}) \geq C
  \DGnorm{\bm{u_h}}^2$, which completes the proof.
\end{proof}
\begin{theorem}
  There exists a positive constant $C$ such that
  \begin{equation}
    |B_h(\bm{u},\bm{v})| \leq C \DGenorm{\bm{u}} \DGenorm{\bm{v}}, \quad \forall \bm{u},\bm{v} \in
    U_h.
    \label{eq_continuity}
  \end{equation}
  \label{th_continuity}
\end{theorem}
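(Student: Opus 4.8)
The plan is to bound $B_h(\bm u,\bm v)$ by treating each of its building blocks separately and pairing it, through the Cauchy--Schwarz inequality, with two of the weighted summands that make up $\DGenorm{\cdot}$. Decompose $B_h$ into the two volume integrals, the two (mutually symmetric) consistency integrals over $\MEh$, and the two penalty integrals. For the volume parts,
\[
  \Big|\sum_{K\in\MTh}\int_K \curl^2\bm u\cdot\curl^2\bm v\,\d{x}\Big|
  \le\Big(\sum_{K\in\MTh}\|\curl^2\bm u\|_{L^2(K)}^2\Big)^{1/2}
       \Big(\sum_{K\in\MTh}\|\curl^2\bm v\|_{L^2(K)}^2\Big)^{1/2},
\]
and similarly for $\sum_K\int_K\bm u\cdot\bm v\,\d{x}$; both are controlled by $\DGnorm{\bm u}\DGnorm{\bm v}\le\DGenorm{\bm u}\DGenorm{\bm v}$.

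For a consistency term such as $\sum_{e\in\MEh}\int_e\jump{\bm u}\cdot\aver{\curl^3\bm v}\,\d{s}$, I would insert the weights $h_e^{-3/2}$ and $h_e^{3/2}$ under the integral, apply Cauchy--Schwarz first on each $e$ and then over $e\in\MEh$, obtaining
\[
  \Big|\sum_{e\in\MEh}\int_e\jump{\bm u}\cdot\aver{\curl^3\bm v}\,\d{s}\Big|
  \le\Big(\sum_{e\in\MEh}h_e^{-3}\|\jump{\bm u}\|_{L^2(e)}^2\Big)^{1/2}
       \Big(\sum_{e\in\MEh}h_e^{3}\|\aver{\curl^3\bm v}\|_{L^2(e)}^2\Big)^{1/2}.
\]
Using mesh quasi-uniformity to replace $h_e$ by $h_K$ on the faces of $K$, the first factor is bounded by the $h_K^{-3}|\jump{\bm u}|^2$ part of $\DGnorm{\bm u}$ and the second by the $h_e^3|\aver{\curl^3\bm v}|^2$ part of $\DGenorm{\bm v}$. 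The term with $\jump{\curl\bm u}\cdot\aver{\curl^2\bm v}$ is estimated in exactly the same way with the weights $h_e^{\mp1/2}$, and the two remaining consistency terms by exchanging the roles of $\bm u$ and $\bm v$.

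Finally, since $\mu_1|_e=\eta h_e^{-3}$ and $\mu_2|_e=\eta h_e^{-1}$, splitting each penalty weight evenly and applying Cauchy--Schwarz gives
\[
  \Big|\sum_{e\in\MEh}\int_e\big(\mu_1\jump{\bm u}\cdot\jump{\bm v}
      +\mu_2\jump{\curl\bm u}\cdot\jump{\curl\bm v}\big)\,\d{s}\Big|
  \le \eta\,\DGnorm{\bm u}\,\DGnorm{\bm v},
\]
again after identifying $h_e$ with $h_K$. Adding the finitely many bounds and absorbing all constants (including $\eta$) into a single $C$ yields $|B_h(\bm u,\bm v)|\le C\DGenorm{\bm u}\DGenorm{\bm v}$ for all $\bm u,\bm v\in U_h$.

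I do not expect a genuine obstacle here: the only delicate points are the bookkeeping of the powers of $h$, so that each face integral is split into precisely the two weighted factors occurring in $\DGenorm{\cdot}$, and the use of quasi-uniformity to pass between $h_e$ and $h_K$. Observe also that $\DGenorm{\bm u}<\infty$ for $\bm u$ in the continuous part of $U_h$ is already built into the definition of $U_h$ through the requirement $\|\curl^j\bm u\|_{1/2+\sigma,\Omega}<\infty$, so the stated bound is meaningful on all of $U_h$.
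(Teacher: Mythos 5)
Your proposal is correct and follows essentially the same route as the paper: a direct term-by-term application of the Cauchy--Schwarz inequality, splitting each face integral into the weighted factors $h_e^{\mp 3/2}$ (resp.\ $h_e^{\mp 1/2}$) that appear in $\DGnorm{\cdot}$ and $\DGenorm{\cdot}$. The paper's proof is just a one-line version of this same computation, so no further comparison is needed.
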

\begin{proof}
  By directly using the Cauchy-Schwarz
  inequality,
  \begin{displaymath}
    \begin{aligned}
      B_h(\bm{u},\bm{v}) \leq C &\left( \sum_{K \in \MTh} \| \curl^{2} \bm{u} 
      \|_{L^2(K)}^2 + \sum_{e \in \MEh} (\| h_e^{-3/2} 
      \jump{\bm{u}} \|^2_{L^2(e)} + \| h_e^{-1/2} 
      \jump{\curl \bm{u} } \|^2_{L^2(e)} \right. \\
      &\left.+ \| h_e^{3/2} \aver{\curl^{3}\bm{u}} \|^2_{L^2(e)} + 
      \| h_e^{1/2} \aver{\curl^{2} \bm{u}}\|^2_{L^2(e)})  \right)^{1/2} \cdot \\
      &\left( \sum_{K \in \MTh} \| \curl^{2} \bm{v} 
      \|_{L^2(K)}^2 + \sum_{e \in \MEh} (\| h_e^{-3/2} 
      \jump{\bm{v}} \|^2_{L^2(e)} + \| h_e^{-1/2} 
      \jump{\curl \bm{v} } \|^2_{L^2(e)} \right. \\
      &\left.+ \| h_e^{3/2} \aver{\curl^{3}\bm{v}} \|^2_{L^2(e)} + 
      \| h_e^{1/2} \aver{\curl^{2} \bm{v}}\|^2_{L^2(e)})  \right)^{1/2} 
    \end{aligned}
  \end{displaymath}
  which completes the proof.
\end{proof}
Now we verify the Galerkin orthogonality.
\begin{lemma}
  Suppose $\bm{u}$ is the exact solution to
  the problem \eqref{eq_quadcurl}, and $\bm{u_h} \in \bmr{U}_h^m$ is the
  numerical solution to the discrete problem \eqref{eq_variational},
  then 
  \begin{equation}
    B_h(\bm{u} - \bm{u_h}, \bm{v_h}) = 0, \quad \forall \bm{v_h} \in \bmr{U}_h^m.
    \label{eq_orthogonality}
  \end{equation}
  \label{le_orthogonality}
\end{lemma}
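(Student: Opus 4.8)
The plan is to establish Galerkin orthogonality by showing that the exact solution $\bm{u}$ satisfies the discrete variational identity $B_h(\bm{u}, \bm{v_h}) = l_h(\bm{v_h})$ for all $\bm{v_h} \in \bmr{U}_h^m$, after which subtracting \eqref{eq_variational} immediately gives \eqref{eq_orthogonality} by bilinearity of $B_h$. First I would use the regularity of $\bm{u}$ (i.e. $\bm{u} \in H(\curl^2,\Omega)$ with the additional $\|\curl^j \bm{u}\|_{1/2+\sigma,\Omega} < \infty$ for $0 \le j \le 3$, so that all the face terms in $B_h$ are well-defined) to observe that $\jump{\bm{u}} = \bm{0}$ and $\jump{\curl\bm{u}} = \bm{0}$ on every interior face $e \in \MEh^i$, since $\bm{u}$ and $\curl\bm{u}$ are single-valued there; hence the entire line of $B_h$ containing $\jump{\bm{u}}\cdot\aver{\curl^3\bm{v_h}}$ and $\jump{\curl\bm{u}}\cdot\aver{\curl^2\bm{v_h}}$, together with the penalty line $\mu_1\jump{\bm{u}}\cdot\jump{\bm{v_h}} + \mu_2\jump{\curl\bm{u}}\cdot\jump{\curl\bm{v_h}}$, collapses to contributions only from boundary faces $\MEh^b$, where the boundary conditions $\bm{u}\times\un = g_1$ and $(\curl\bm{u})\times\un = g_2$ convert $\jump{\bm{u}}$ into $g_1$ and $\jump{\curl\bm{u}}$ into $g_2$.

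The core computation is an elementwise integration by parts for the curl operator applied twice, relating the volume term $\sum_K \int_K \curl^2\bm{u}\cdot\curl^2\bm{v_h}$ to $\sum_K \int_K \curl^4\bm{u}\cdot\bm{v_h}$ plus interface/boundary terms. Using the identity $\int_K \curl\bm{a}\cdot\bm{b} - \int_K \bm{a}\cdot\curl\bm{b} = \int_{\partial K} (\bm{a}\times\un)\cdot\bm{b}$ (with the appropriate sign/orientation in dimensions $d=2,3$), I would apply it once with $\bm{a} = \curl^2\bm{u}$, $\bm{b} = \curl^2\bm{v_h}$ to move a curl off $\bm{u}$, then again, carefully tracking which quantity carries the normal cross product; after summing over $K$ and regrouping face contributions into jumps and averages (using that $\curl^3\bm{u}$ and $\curl^2\bm{u}$ are single-valued on interior faces so that their averages equal their traces), the interior-face terms of the form $\sum_e \int_e \jump{\bm{v_h}}\cdot\aver{\curl^3\bm{u}} + \jump{\curl\bm{v_h}}\cdot\aver{\curl^2\bm{u}}$ appear with exactly the sign needed to cancel against the symmetric terms already in $B_h(\bm{u},\bm{v_h})$. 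What survives from the volume manipulation is $\sum_K\int_K (\curl^4\bm{u} + \bm{u})\cdot\bm{v_h} = \sum_K\int_K \bm{f}\cdot\bm{v_h}$ by the PDE, plus boundary-face terms involving $g_1$ and $g_2$ that must be matched against the corresponding terms in $l_h(\bm{v_h})$.

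Assembling these pieces, one checks that $B_h(\bm{u},\bm{v_h})$ equals $\sum_K\int_K\bm{f}\cdot\bm{v_h}\,\d{\bm{x}}$ plus boundary integrals $\sum_{e\in\MEh^b}\int_e\big(g_1\cdot(\curl^3\bm{v_h} + \mu_1\bm{v_h}\times\un) + g_2\cdot(\curl^2\bm{v_h} + \mu_2\curl\bm{v_h}\times\un)\big)\,\d{\bm{s}}$, which is precisely $l_h(\bm{v_h})$; subtracting \eqref{eq_variational} then yields \eqref{eq_orthogonality}. I expect the main obstacle to be bookkeeping the signs and normal-cross-product conventions in the repeated curl integration by parts — especially reconciling the $d=2$ scalar-curl / vector-curl adjoint pairing with the $d=3$ vector case — and ensuring that the jump and average operators as defined in \eqref{eq_traceopei}–\eqref{eq_traceopeb} (with the cross-product-by-normal convention) combine on each interior face so that the $\bm{u}$-side and $\bm{v_h}$-side face terms in $B_h$ cancel exactly rather than doubling. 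A secondary point requiring care is confirming that the assumed fractional regularity $\|\curl^j\bm{u}\|_{1/2+\sigma,\Omega}<\infty$ is exactly what makes all boundary and interface integrals of $\curl^2\bm{u}$ and $\curl^3\bm{u}$ against traces of $\bm{v_h}$ well-defined, so the manipulations are rigorous and not merely formal.
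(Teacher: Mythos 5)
Your proposal is correct and follows essentially the same route as the paper's proof: note that $\jump{\bm{u}}$ and $\jump{\curl\bm{u}}$ vanish on interior faces and reduce to $g_1,g_2$ on boundary faces, perform elementwise integration by parts on the fourth-order curl term to identify the consistency face terms, and conclude $B_h(\bm{u},\bm{v_h})=l_h(\bm{v_h})$ before subtracting the discrete equation. The only cosmetic difference is the direction in which you read the integration-by-parts identity (the paper starts from $\curl^4\bm{u}\cdot\bm{v_h}$ and moves curls onto $\bm{v_h}$), which does not change the argument.
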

\begin{proof}
  We first have for the two jump terms
  \begin{displaymath}
    \jump{\bm{u}}|_{e} = \bm{0}, \quad \jump{\curl \bm{u}}|_{e} = 0, \quad
     \quad \forall e \in \MEh^i, \,\, i = 0,1.
  \end{displaymath}
Taking the exact solution into $B_h(\cdot, \cdot)$, we have that
\begin{displaymath}
  \begin{aligned}
    B_h(u, \bm{v_h}) &=\sum_{K \in \MTh} \int_{K}
    \curl^{2} \bm{u}\cdot \curl^{2} \bm{v_h} \d{\bm{x}} +\sum_{K \in \MTh} \int_{K}
    \bm{u}\cdot \bm{v_h} \d{\bm{x}}\\
    &+ \sum_{e \in \MEh} \int_{e}  
    \jump{\bm{v_h}} \cdot \aver{\curl^{3}\bm{u}} \d{\bm{s}}
    +\sum_{e \in \MEh} \int_{e} \jump{\curl \bm{v_h}} 
    \aver{\curl^{2} \bm{u}}  \d{\bm{s}} \\
    &+ \sum_{e \in \MEh^b} \int_{e} \left( g_1 \cdot(\curl^{3}\bm{v_h}
    + \mu_1 \bm{v_h}\times \un) + g_2 \cdot(\curl^{2}\bm{v_h} + \mu_2 \curl\bm{v_h}\times\un) \right)
   \d{\bm{s}}
  \end{aligned}
\end{displaymath}
We multiply the test function $\bm{v_h}$ at both side of equation
\eqref{eq_quadcurl}, and apply the integration by parts to get
\begin{displaymath}
  \begin{aligned}
    \sum_{K \in \MTh} \int_{K} &\bm{f}\cdot \bm{v_h} \d{\bm{x}} = 
    \sum_{K \in \MTh} \int_{K} \curl^{4}\bm{u}\cdot \bm{v_h} \d{\bm{x}} +\sum_{K \in \MTh} \int_{K}
    \bm{u}\cdot \bm{v_h} \d{\bm{x}}\\
    &= \sum_{K \in \MTh} \int_{K}
    \curl^{2} \bm{u}\cdot \curl^{2} \bm{v_h} \d{\bm{x}} +\sum_{K \in \MTh} \int_{K}
    \bm{u}\cdot \bm{v_h} \d{\bm{x}}\\
    &+ \sum_{K \in \MTh} \left( \int_{\partial K} \curl^{3}\bm{u}\cdot
     \bm{v_h}\times\un \d{\bm{s}} + \int_{\partial K}
    \curl^{2} \bm{u}  \cdot (\curl \bm{v_h}\times\un) \d{\bm{s}} \right) \\
    &=\sum_{K \in \MTh} \int_{K}
    \curl^{2} \bm{u}\cdot \curl^{2} \bm{v_h} \d{\bm{x}} +\sum_{K \in \MTh} \int_{K}
     \bm{u}\cdot \bm{v_h} \d{\bm{x}} \\
    &+ \sum_{e \in \MEh} \int_{e}  
    \jump{\bm{v_h}} \cdot \aver{\curl^{3}\bm{u}} \d{\bm{s}}
    +\sum_{e \in \MEh} \int_{e} \jump{\curl \bm{v_h}} 
    \aver{\curl^{2} \bm{u}}  \d{\bm{s}} 
    \d{\bm{s}} 
  \end{aligned}
\end{displaymath}
Thus, by simply calculating, we obtain that
\begin{displaymath}
  B_h(\bm{u_h}, \bm{v_h}) = l_h(\bm{v_h}) = B_h(\bm{u}, \bm{v_h}),
\end{displaymath}
which completes the proof.
\end{proof}
Then we establish the interpolation error estimate of the reconstruction
operator.
\begin{lemma}
  For $0 \leq h \leq h_0$ and $m \geq 2$, there exists a constant $C$ 
  such that
  \begin{equation}
    \DGenorm{\bm{v} - \mc{R} \bm{v}} \leq C \Lambda_m h^{m-1} \| \bm{v}
    \|_{H^{m+1}(\Omega)}, \quad \forall v \in
    H^{s}(\Omega), \quad s = \max(4, m+1).
    \label{eq_interpolation_err}
  \end{equation}
  \label{le_interpolation_err}
\end{lemma}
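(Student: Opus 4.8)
The plan is to set $\bm{e}:=\bm{v}-\mc{R}\bm{v}$, expand $\DGenorm{\bm{e}}^2$ into its six constituent pieces, and bound each one locally by $C\Lambda_m^2 h^{2(m-1)}\|\bm{v}\|_{H^{m+1}(S(K))}^2$. The six pieces are the two volume terms $\sum_{K}\|\bm{e}\|_{L^2(K)}^2$ and $\sum_{K}\|\curl^2\bm{e}\|_{L^2(K)}^2$, the two jump terms $\sum_{e}h_K^{-3}\|\jump{\bm{e}}\|_{L^2(e)}^2$ and $\sum_{e}h_K^{-1}\|\jump{\curl\bm{e}}\|_{L^2(e)}^2$, and the two averaged terms $\sum_{e}h_e^3\|\aver{\curl^3\bm{e}}\|_{L^2(e)}^2$ and $\sum_{e}h_e\|\aver{\curl^2\bm{e}}\|_{L^2(e)}^2$. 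After the local bounds are in hand I would sum over $K$ and $e$, using the quasi-uniformity of $\MTh$ to interchange $h_K$, $h_e$ and $h$ freely, and the finite overlap of the patches $\{S(K)\}$ (a consequence of Assumption~\ref{as_patch} together with quasi-uniformity) to turn $\sum_{K}\|\bm{v}\|_{H^{m+1}(S(K))}^2$ into $C\|\bm{v}\|_{H^{m+1}(\Omega)}^2$.

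The volume terms are immediate from the local approximation estimate \eqref{eq_localapproximation}: with $q=0$ it gives $\|\bm{e}\|_{L^2(K)}\le C\Lambda_m h_K^{m+1}\|\bm{v}\|_{H^{m+1}(S(K))}$, and with $q=2$ (admissible since $m\ge2$) it gives $\|\curl^2\bm{e}\|_{L^2(K)}\le C\|\bm{e}\|_{H^2(K)}\le C\Lambda_m h_K^{m-1}\|\bm{v}\|_{H^{m+1}(S(K))}$; since $h\le h_0$, the first contribution is absorbed into the second. For the jump terms I would first use that $\bm{v}\in H^s(\Omega)\subset H^2(\Omega)$, so $\jump{\bm{v}}=\bm{0}$ and $\jump{\curl\bm{v}}=0$ on every interior face, while on a boundary face $\jump{\bm{e}}=\bm{e}\times\un$ and $\jump{\curl\bm{e}}=\curl\bm{e}\times\un$; in all cases $|\jump{\bm{e}}|$ and $|\jump{\curl\bm{e}}|$ on $e$ are controlled by the traces of $\bm{e}$ and $\curl\bm{e}$ from the adjacent element(s). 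Applying the trace inequality \eqref{eq_trace} to $\bm{e}$ and to $\curl\bm{e}$ and then \eqref{eq_localapproximation} with $q=0,1$ (resp.\ $q=1,2$, again admissible for $m\ge2$) yields $\|\jump{\bm{e}}\|_{L^2(e)}^2\le C\Lambda_m^2 h_K^{2m+1}\|\bm{v}\|_{H^{m+1}(S(K))}^2$ and $\|\jump{\curl\bm{e}}\|_{L^2(e)}^2\le C\Lambda_m^2 h_K^{2m-1}\|\bm{v}\|_{H^{m+1}(S(K))}^2$, so the weights $h_K^{-3}$ and $h_K^{-1}$ bring both down to the order $h^{2(m-1)}$.

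The averaged terms are the crux, since they involve $\curl^2$ and $\curl^3$ of $\bm{e}$ on element boundaries while $\mc{R}\bm{v}|_K$ is only a degree-$m$ polynomial. For $\aver{\curl^2\bm{e}}$ I would use \eqref{eq_trace} to get $\|\curl^2\bm{e}\|_{L^2(\partial K)}^2\le C\big(h_K^{-1}\|\bm{e}\|_{H^2(K)}^2+h_K\|\bm{e}\|_{H^3(K)}^2\big)$ and estimate $\|\bm{e}\|_{H^3(K)}$ by \eqref{eq_localapproximation} with $q=3$ when $m\ge3$, and, when $m=2$, by noting that $\curl^2(\mc{R}\bm{v}|_K)$ is constant so $\|\nabla\curl^2\bm{e}\|_{L^2(K)}=\|\nabla\curl^2\bm{v}\|_{L^2(K)}\le C|\bm{v}|_{H^3(K)}$; multiplying by $h_e$ then gives the desired order in both cases. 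For $\aver{\curl^3\bm{e}}$ I would split $\bm{e}=(\bm{v}-\bm{p})+(\bm{p}-\mc{R}\bm{v}|_K)$ with $\bm{p}\in\mb{P}_m(S(K))^d$ a Bramble--Hilbert/averaged-Taylor approximation on the star-shaped patch $S(K)$ satisfying $\|\bm{v}-\bm{p}\|_{H^q(S(K))}\le Ch_K^{m+1-q}\|\bm{v}\|_{H^{m+1}(S(K))}$; the polynomial difference $\bm{p}-\mc{R}\bm{v}|_K$ is controlled by the inverse inequality \eqref{eq_inverse} together with \eqref{eq_trace} (using $\|\bm{p}-\mc{R}\bm{v}\|_{L^2(K)}\le C\Lambda_m h_K^{m+1}\|\bm{v}\|_{H^{m+1}(S(K))}$), and the smooth remainder $\curl^3(\bm{v}-\bm{p})$ is controlled by \eqref{eq_trace}. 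This last step is exactly where the hypothesis $\bm{v}\in H^{s}(\Omega)$ with $s=\max(4,m+1)$ enters: it guarantees $\curl^3(\bm{v}-\bm{p})\in H^1(K)$, so that it admits an $L^2(\partial K)$-trace. Collecting all six estimates, summing over $K$ and $e$, and invoking the finite-overlap property gives \eqref{eq_interpolation_err}.

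I expect the main obstacle to be precisely this $\curl^3$-average estimate: because the reconstruction carries only degree $m$, the third-order curl on faces cannot be handled by \eqref{eq_localapproximation} alone, and one must peel off a polynomial piece (bounded via inverse inequalities) from a smooth piece (bounded via the trace inequality and the extra regularity). Everything else is routine bookkeeping with the trace inequality \eqref{eq_trace}, the inverse inequality \eqref{eq_inverse}, and the local approximation bound \eqref{eq_localapproximation} already established.
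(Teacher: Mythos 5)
Your proposal is correct, and it is in fact more complete than the proof the paper actually gives. For the four terms making up $\DGnorm{\cdot}$ (the two volume terms and the two weighted jump terms) you do exactly what the paper does: the local approximation bound \eqref{eq_localapproximation} with $q=0,2$ for the volume terms, and the trace inequality \eqref{eq_trace} combined with \eqref{eq_localapproximation} at $q=0,1$ and $q=1,2$ for the jumps. Where you diverge is precisely the part you flag as the crux: the two averaged face terms $\sum_e h_e^3\|\aver{\curl^3\bm{e}}\|_{L^2(e)}^2$ and $\sum_e h_e\|\aver{\curl^2\bm{e}}\|_{L^2(e)}^2$ that distinguish $\DGenorm{\cdot}$ from $\DGnorm{\cdot}$. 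The paper's proof simply omits them and declares the proof complete; note that the norm-equivalence Lemma \ref{le_normEq} cannot be invoked to excuse this, since $\bm{v}-\mc{R}\bm{v}$ does not lie in $\bmr{U}_h^m$. Your treatment --- splitting $\bm{e}$ into a smooth remainder $\bm{v}-\bm{p}$ handled by the trace inequality and the extra regularity, plus a polynomial piece $\bm{p}-\mc{R}\bm{v}$ handled by the inverse inequality \eqref{eq_inverse} --- is the right way to fill this gap, and it is also the only place in the argument where the hypothesis $s=\max(4,m+1)$ is actually used, which your write-up makes explicit while the paper does not.

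One small caveat on your $\curl^3$-average estimate in the case $m=2$: controlling the trace of $\curl^3(\bm{v}-\bm{p})$ via \eqref{eq_trace} produces a term $h_K\|\bm{v}-\bm{p}\|_{H^4(K)}^2=h_K|\bm{v}|_{H^4(K)}^2$ (since $\bm{p}$ has degree $2$), so after multiplying by $h_e^3$ you obtain a bound involving $\|\bm{v}\|_{H^4(\Omega)}$ rather than $\|\bm{v}\|_{H^{m+1}(\Omega)}=\|\bm{v}\|_{H^3(\Omega)}$. This is consistent with the regularity assumption $\bm{v}\in H^{s}(\Omega)$, $s=\max(4,m+1)$, but it means the right-hand side of \eqref{eq_interpolation_err} should really read $\|\bm{v}\|_{H^{s}(\Omega)}$ for $m=2$; that is a defect of the lemma as stated, not of your argument, and it is worth recording rather than papering over.
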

\begin{proof}
  From Lemma
  \ref{le_localapproximation}, we can show that
  \begin{displaymath}
    \begin{aligned}
      \sum_{K \in \MTh} \| \curl^{2} \bm{v} - \curl^{2}(\mc{R} \bm{v}) \|^2_{L^2(K)} 
      &\leq \sum_{K \in \MTh} C \Lambda_m^2 h_K^{2m-2} \|  \bm{v}
      \|^2_{H^{m+1}(S(K))} \\
      &\leq C \Lambda_m^2 h^{2m-2} \|  \bm{v} \|^2_{H^{m+1}(\Omega)} \leq
      C \Lambda_m^2 h^{2m-2} \| \bm{v} \|^2_{H^{m+1}(\Omega)} \\
    \end{aligned}
  \end{displaymath}
  ,also
  \begin{displaymath}
    \begin{aligned}
      \sum_{K \in \MTh} \|  \bm{v} - (\mc{R} \bm{v}) \|^2_{L^2(K)} 
      &\leq \sum_{K \in \MTh} C \Lambda_m^2 h_K^{2m+2} \|  \bm{v}
      \|^2_{H^{m+1}(S(K))} \\
      &\leq C \Lambda_m^2 h^{2m+2} \|  \bm{v} \|^2_{H^{m+1}(\Omega)} \leq
      C \Lambda_m^2 h^{2m+2} \| \bm{v} \|^2_{H^{m+1}(\Omega)}, \\
    \end{aligned}
  \end{displaymath}
  By the trace estimate \eqref{eq_trace} and the mesh regularity,
  \begin{displaymath}
    \begin{aligned}
      \sum_{e \in \MEh}  h_e^{-1} \| \jump{\curl ( \bm{v} - \mc{R} \bm{v})} 
      \|^2_{L^2(e)} &\leq C \sum_{K \in \MTh} 
      \left( h_K^{-2} \| \curl( \bm{v} - \mc{R}\bm{v}) \|^2_{L^2(K)}
      + \| \curl ( \bm{v} - \mc{R}\bm{v}) \|^2_{H^1(K)} \right) \\
      &\leq C \Lambda_m^2 h^{2m-2} \|  \bm{v}
      \|^2_{H^{m+1}(\Omega)} .
    \end{aligned}
  \end{displaymath}
  and
  \begin{displaymath}
    \begin{aligned}
      \sum_{e \in \MEh}  h_e^{-3} \| \jump{  \bm{v} - \mc{R} \bm{v}} 
      \|^2_{L^2(e)} &\leq C \sum_{K \in \MTh} 
      \left( h_K^{-4} \|  \bm{v} - \mc{R}\bm{v}\|^2_{L^2(K)}
      + h_K^{-2}\|  \bm{v} - \mc{R}\bm{v} \|^2_{H^1(K)} \right) \\
      &\leq C \Lambda_m^2 h^{2m-2} \|  \bm{v}
      \|^2_{H^{m+1}(\Omega)} .
    \end{aligned}
  \end{displaymath}
  which completes the proof.
\end{proof}
Now we are ready to present the \emph{a priori} error estimate under
$\DGenorm{\cdot}$ within the standard Lax-Milgram framework.
\begin{theorem}
  Suppose the problem \eqref{eq_quadcurl} has a
  solution $\bm{u} \in H^{s}(\Omega)$, where $s = \max(4,
  m+1)$, $m \geq 2$ and $\Lambda_m$ has a uniform upper bound 
  independent of $h$.
  Let the bilinear form $B_h(\cdot, \cdot)$ be defined with a
  sufficiently large $\eta$ and $\bm{u_h} \in \bmr{U}_h^m$ be the numerical 
  solution to the problem \eqref{eq_variational}. Then for $h
  \leq h_0$ there exists a constant $C$ such that
  \begin{equation}
    \DGenorm{\bm{u} - \bm{u_h}} \leq C\Lambda_m h^{m-1} \| \bm{u} \|_{H^{m+1}(\Omega)}.
    \label{eq_errorestimate}
  \end{equation}
  \label{th_errorestimate}
\end{theorem}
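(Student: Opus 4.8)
The plan is to run the by-now standard Cea/Strang argument for a consistent symmetric interior penalty method, in which the variational crime term disappears thanks to the Galerkin orthogonality of Lemma~\ref{le_orthogonality}. Before the estimate proper, I would observe that coercivity (Theorem~\ref{th_coercivity}) and continuity (Theorem~\ref{th_continuity}), restricted to the finite-dimensional space $\bmr{U}_h^m$, already give unique solvability of the discrete problem~\eqref{eq_variational} by the Lax--Milgram theorem, so that $\bm{u_h}$ in the statement is well defined.

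For the error itself, I would decompose it through the reconstruction of the exact solution, writing $\bm{u} - \bm{u_h} = (\bm{u} - \mc{R}\bm{u}) + \bm{\xi_h}$ with $\bm{\xi_h} := \mc{R}\bm{u} - \bm{u_h} \in \bmr{U}_h^m$. Applying coercivity to $\bm{\xi_h}$, then inserting the true error and using orthogonality,
\begin{displaymath}
  C\,\DGenorm{\bm{\xi_h}}^2 \leq B_h(\bm{\xi_h}, \bm{\xi_h}) = B_h(\mc{R}\bm{u} - \bm{u}, \bm{\xi_h}) + B_h(\bm{u} - \bm{u_h}, \bm{\xi_h}) = B_h(\mc{R}\bm{u} - \bm{u}, \bm{\xi_h}),
\end{displaymath}
since the last term vanishes by~\eqref{eq_orthogonality}. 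Because $\bm{u} \in H^{s}(\Omega)$ with $s = \max(4, m+1)$ lies in the regularity class on which $B_h$ and $\DGenorm{\cdot}$ were defined, $\mc{R}\bm{u} - \bm{u}$ belongs to $U_h$ and continuity (Theorem~\ref{th_continuity}) applies, giving $B_h(\mc{R}\bm{u} - \bm{u}, \bm{\xi_h}) \leq C\,\DGenorm{\mc{R}\bm{u} - \bm{u}}\,\DGenorm{\bm{\xi_h}}$; dividing by $\DGenorm{\bm{\xi_h}}$ yields $\DGenorm{\bm{\xi_h}} \leq C\,\DGenorm{\bm{u} - \mc{R}\bm{u}}$.

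Finally, the triangle inequality gives $\DGenorm{\bm{u} - \bm{u_h}} \leq \DGenorm{\bm{u} - \mc{R}\bm{u}} + \DGenorm{\bm{\xi_h}} \leq (1 + C)\DGenorm{\bm{u} - \mc{R}\bm{u}}$, and the interpolation estimate of Lemma~\ref{le_interpolation_err} turns the right-hand side into $C\Lambda_m h^{m-1}\|\bm{u}\|_{H^{m+1}(\Omega)}$, which is exactly~\eqref{eq_errorestimate}. The argument is essentially bookkeeping; the only delicate point is making sure the intermediate function $\mc{R}\bm{u} - \bm{u}$ really lies in the space $U_h$ on which continuity was proved — in particular that the face averages $\aver{\curl^{2}(\cdot)}$ and $\aver{\curl^{3}(\cdot)}$ entering $\DGenorm{\cdot}$ are meaningful traces of $\bm{u}$ — which is precisely the role of the regularity hypothesis $\bm{u} \in H^{\max(4,m+1)}(\Omega)$ and is already built into the statement of Lemma~\ref{le_interpolation_err}, so no work beyond invoking it is needed. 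I expect the $L^2$ estimate (not this theorem) to be where a genuine duality/regularity obstacle appears; here everything is driven by the already-established coercivity, continuity, orthogonality, and interpolation bounds.
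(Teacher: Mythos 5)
Your proof is correct and follows essentially the same route as the paper: a Céa-type argument combining coercivity, Galerkin orthogonality, continuity, and the interpolation estimate of Lemma~\ref{le_interpolation_err}, with $\mc{R}\bm{u}$ as the comparison element (the paper phrases it as an infimum over $\bm{v_h}\in\bmr{U}_h^m$ and then picks $\bm{v_h}=\mc{R}\bm{u}$, which is the same computation). Your added remarks on discrete well-posedness and on why $\mc{R}\bm{u}-\bm{u}\in U_h$ are sound but not needed beyond what the paper already assumes.
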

\begin{proof}
  From \eqref{eq_coercivity}, \eqref{eq_continuity} and
  \eqref{eq_orthogonality}, we have that for any $\bm{v_h} \in \bmr{U}_h^m$,
  \begin{displaymath}
    \begin{aligned}
      \DGenorm{\bm{u_h} - \bm{v_h}}^2 &\leq C B_h(\bm{u_h} - \bm{v_h}, \bm{u_h} - \bm{v_h}) = C
      B_h(\bm{u} - \bm{u_h}, \bm{u_h} - \bm{v_h}) \\
      &\leq C \DGenorm{\bm{u} - \bm{v_h}} \DGenorm{\bm{u_h} - \bm{v_h}}.
    \end{aligned}
  \end{displaymath}
  By the triangle inequality, there holds
  \begin{displaymath}
    \DGenorm{\bm{u} - \bm{u_h}} \leq \DGenorm{\bm{u} - \bm{v_h}} + \DGenorm{\bm{v_h} - \bm{u_h}}
    \leq C \inf_{\bm{v_h} \in \bmr{U}_h^m} \DGenorm{\bm{u} - \bm{v_h}}.
  \end{displaymath}
  Let $\bm{v_h} = \mc{R} \bm{u}$, by the inequality
  \eqref{eq_interpolation_err}, we arrive at 
  \begin{displaymath}
    \DGenorm{\bm{u} - \bm{u_h}} \leq C\Lambda_m \DGenorm{\bm{u} - \mc{R} \bm{u}} \leq C h^{m-1}
    \|\bm{u}\|_{H^{m+1}(\Omega)},
  \end{displaymath}
  which completes the proof.
\end{proof}
\begin{corollary}
  Under the same assumptions, there exists a constant $C$ such that
  \begin{equation}
    \|\bm{u} - \bm{u_h}\|_{L^2(\Omega)} \leq C\Lambda_m h^{m-1} \| \bm{u} \|_{H^{m+1}(\Omega)}.
    \label{eq_errorestimate}
  \end{equation}
  \label{th_errorestimate}
\end{corollary}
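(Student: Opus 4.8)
The plan is to read off the $L^2$ bound directly from the energy-norm estimate already established in Theorem~\ref{th_errorestimate}, since $\|\cdot\|_{L^2(\Omega)}$ is one of the constituent terms of the DG norms. First I would check that the objects are well defined: because $\bm{u}\in H^s(\Omega)$ with $s=\max(4,m+1)\ge 4$, each $\curl^j\bm{u}$ ($0\le j\le 3$) lies in $H^{s-j}(\Omega)$, which for small $\sigma$ embeds into $H^{1/2+\sigma}(\Omega)$, so $\bm{u}$ belongs to the regularity class appearing in the definition of $U_h$; together with $\bm{u_h}\in\bmr{U}_h^m$ this shows $\bm{u}-\bm{u_h}\in U_h$, so both $\DGnorm{\bm{u}-\bm{u_h}}$ and $\DGenorm{\bm{u}-\bm{u_h}}$ make sense.

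Next, straight from the definition of $\DGnorm{\cdot}$ one has, for every $\bm{v}\in U_h$,
\begin{displaymath}
  \|\bm{v}\|_{L^2(\Omega)}^2=\sum_{K\in\MTh}\int_K|\bm{v}|^2\,\d{\bm{x}}\le\DGnorm{\bm{v}}^2,
\end{displaymath}
all remaining contributions to $\DGnorm{\bm{v}}^2$ being nonnegative; and likewise $\DGnorm{\bm{v}}\le\DGenorm{\bm{v}}$, since $\DGenorm{\bm{v}}^2$ equals $\DGnorm{\bm{v}}^2$ plus two nonnegative face terms (this is the easy half of Lemma~\ref{le_normEq}, which holds on all of $U_h$, not only on $\bmr{U}_h^m$). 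Chaining these inequalities with the energy-norm estimate of Theorem~\ref{th_errorestimate} applied to $\bm{v}=\bm{u}-\bm{u_h}$ gives
\begin{displaymath}
  \|\bm{u}-\bm{u_h}\|_{L^2(\Omega)}\le\DGnorm{\bm{u}-\bm{u_h}}\le\DGenorm{\bm{u}-\bm{u_h}}\le C\Lambda_m h^{m-1}\|\bm{u}\|_{H^{m+1}(\Omega)},
\end{displaymath}
which is precisely the claimed estimate.

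There is no genuine obstacle here; the corollary is a free by-product of the energy-norm convergence. The only point worth flagging is that this argument uses no duality (Aubin--Nitsche) device, so the exponent $m-1$ is the same as in the energy norm rather than being improved by one power of $h$. Upgrading the $L^2$ rate would require regularity estimates for an adjoint fourth-order curl problem, which are not invoked in this paper, so I would not attempt that and would simply present the short chain of inequalities above.
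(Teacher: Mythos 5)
Your argument is correct and matches the paper's own (implicit) reasoning: the paper justifies the corollary in the subsequent remark by observing that $\DGenorm{\cdot}$ dominates $\|\cdot\|_{L^2(\Omega)}$, exactly the chain of inequalities you write down. Your additional observations — that the well-definedness of the norms on $\bm{u}-\bm{u_h}$ should be checked, and that the rate is not improved absent an Aubin--Nitsche duality argument — are both accurate and consistent with the paper.
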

\begin{remark}
  We have proved that under the energy norm $\DGenorm{\cdot}$ the
  numerical solution $\bm{u}_h$
  has the optimal convergence rate. It can be seen that
  $\DGenorm{\cdot}$ is stronger than $\|\cdot\|_{L^2(\Omega)}$ 
   from the definition of $\DGenorm{\cdot}$.  Hence, from
  \eqref{eq_errorestimate} we can conclude that the numerical
  solution $\bm{u}_h$ at least have the $O(h^{m-1})$
  convergence rate in $L^2$ norm, which is exactly the convergence rate indicated
  by our numerical tests.
\end{remark}

\section{Numerical Results}
\label{sec_numericalresults}
In this section, we conduct a series numerical experiments to test the
performance of our method. For the accuracy $2 \leq m \leq 4 $, the
threshold $\# S$ we used in the examples
 are listed in Tab.~\ref{tab_patch}. For all examples, 
the boundary $g_1, g_2$ and the right hand side
$f$ in the equation \eqref{eq_quadcurl} are chosen according to the
exact solution. \\

 \textbf{Example 1.} 
We first consider a fourth-order curl problem defined on the squared domain 
$\Omega = (0,1)^2$ . The exact solution is chosen by
\begin{displaymath}
  \bm{u}(x, y) = \begin{bmatrix}
    3\pi \sin^2(\pi y)\cos(\pi y)\sin^3(\pi x)\\
    -3\pi \sin^2(\pi x)\cos(\pi x)\sin^3(\pi y) \\
  \end{bmatrix},
\end{displaymath}

We solve the problem on a sequence of meshes with the size
$h = 1/8, 1/16, 1/32, 1/64, 1/128$. The convergence histories under 
the $\DGnorm{\cdot}$ (which is equivalent to $\DGenorm{\cdot}$ )and
$\| \cdot \|_{L^2(\Omega)}$ are shown in
Fig.~\ref{fig_ex1err}. The error under the energy norm is decreasing 
at the speed $O(h^{m-1})$ for fixed $m$. For $L^2$ error, the speed is
the same as the energy norm. These results
coincide with the theoretical analysis in Theorem
\ref{th_errorestimate} .\\

\begin{table}[htp]
  \begin{center}
\begin{minipage}[t]{0.3\textwidth}
  \centering
  \begin{tabular}{p{0.6cm}|p{0.6cm}|p{0.6cm}|p{0.6cm}}
  \hline\hline
  $m$    & 2 & 3 & 4  \\ \hline
  $\# S$ & 12 & 20 & 27 \\ 
  \hline\hline
  \end{tabular}
\end{minipage}
\hspace{2cm}
\begin{minipage}[t]{0.3\textwidth}
  \centering
  \begin{tabular}{p{0.6cm}|p{0.6cm}|p{0.6cm}}
    \hline\hline
   $m$ & 2 & 3  \\ \hline
   $\# S$ & 25 & 47  \\ 
    \hline\hline
  \end{tabular}
\end{minipage}
\end{center}
\caption{The $\# S$ used in 2D and 3D examples.} 
\label{tab_patch}
\end{table}

\begin{figure}
  \centering
  \includegraphics[width=0.30\textwidth]{./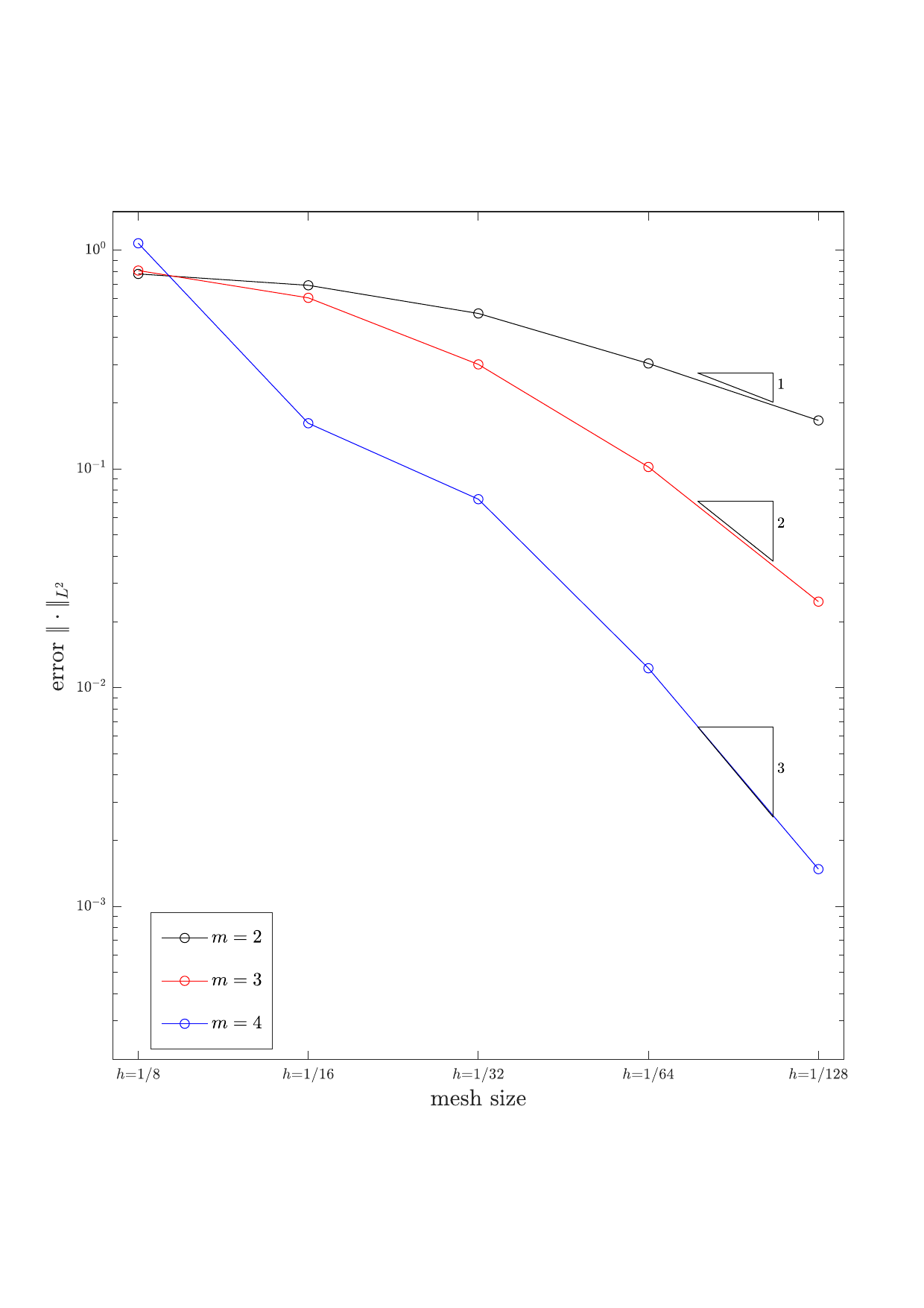}
  \hspace{30pt}
  \includegraphics[width=0.30\textwidth]{./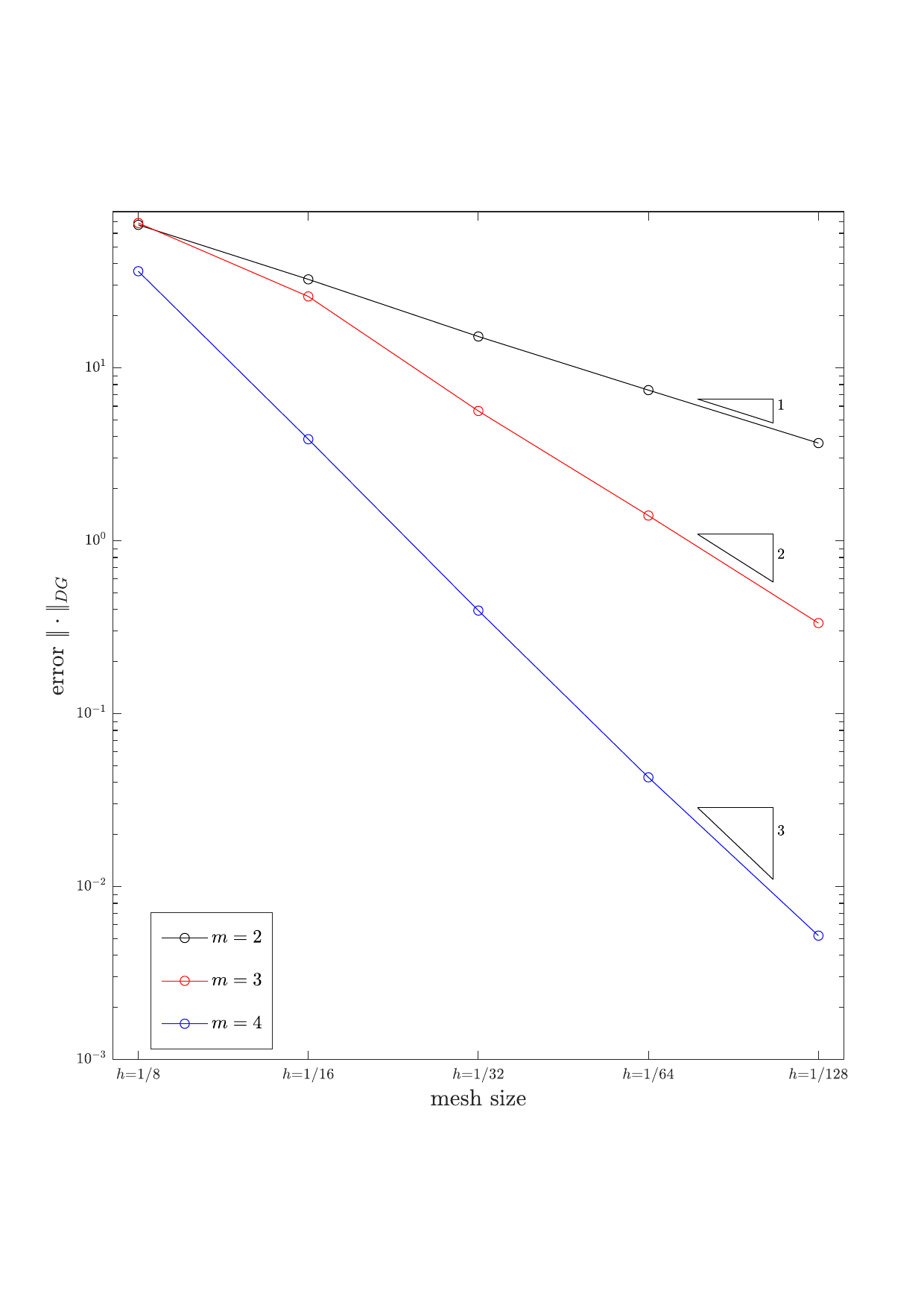}
  \caption{The convergence histories under the $\| \cdot \|_{L^2(\Omega)}$ 
  (left) and the$\DGnorm{\cdot}$ (right) in Example 1.}
  \label{fig_ex1err}
\end{figure}

 \textbf{Example 2.}
We solve a three-dimensional problem in this
case. The computation domain is an unit cube $\Omega = (0,1)^3$
.
We select the exact solution as
\begin{displaymath}
  \bm{u}(x, y, z) = \begin{bmatrix}
    \sin(\pi y)\sin(\pi z)\\
    \sin(\pi z)\sin(\pi x) \\
    \sin(\pi x)\sin(\pi y)\\    
  \end{bmatrix},
\end{displaymath}
We use the tetrahedra meshes generated by the Gmsh software
\cite{geuzaine2009gmsh}. We solve the problem on three different meshes
with the reconstruction order $m = 2,3$. The convergence order
in both norms is shown in
Fig.~\ref{fig_ex5err}, also consistent with our
theoretical predictions.

\begin{figure}[htbp]
  \centering
  \includegraphics[width=0.30\textwidth]{./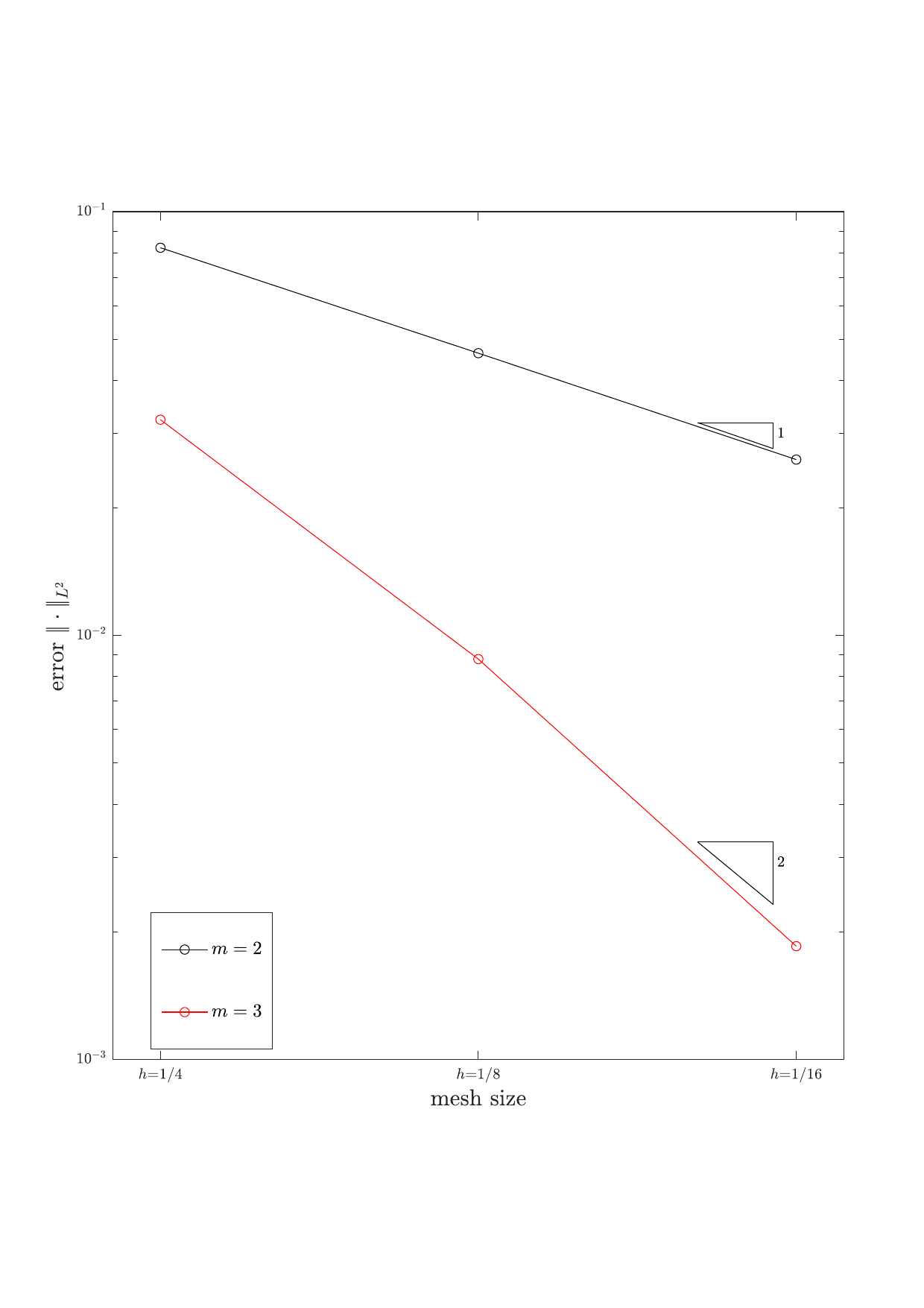}
  \hspace{30pt}
  \includegraphics[width=0.30\textwidth]{./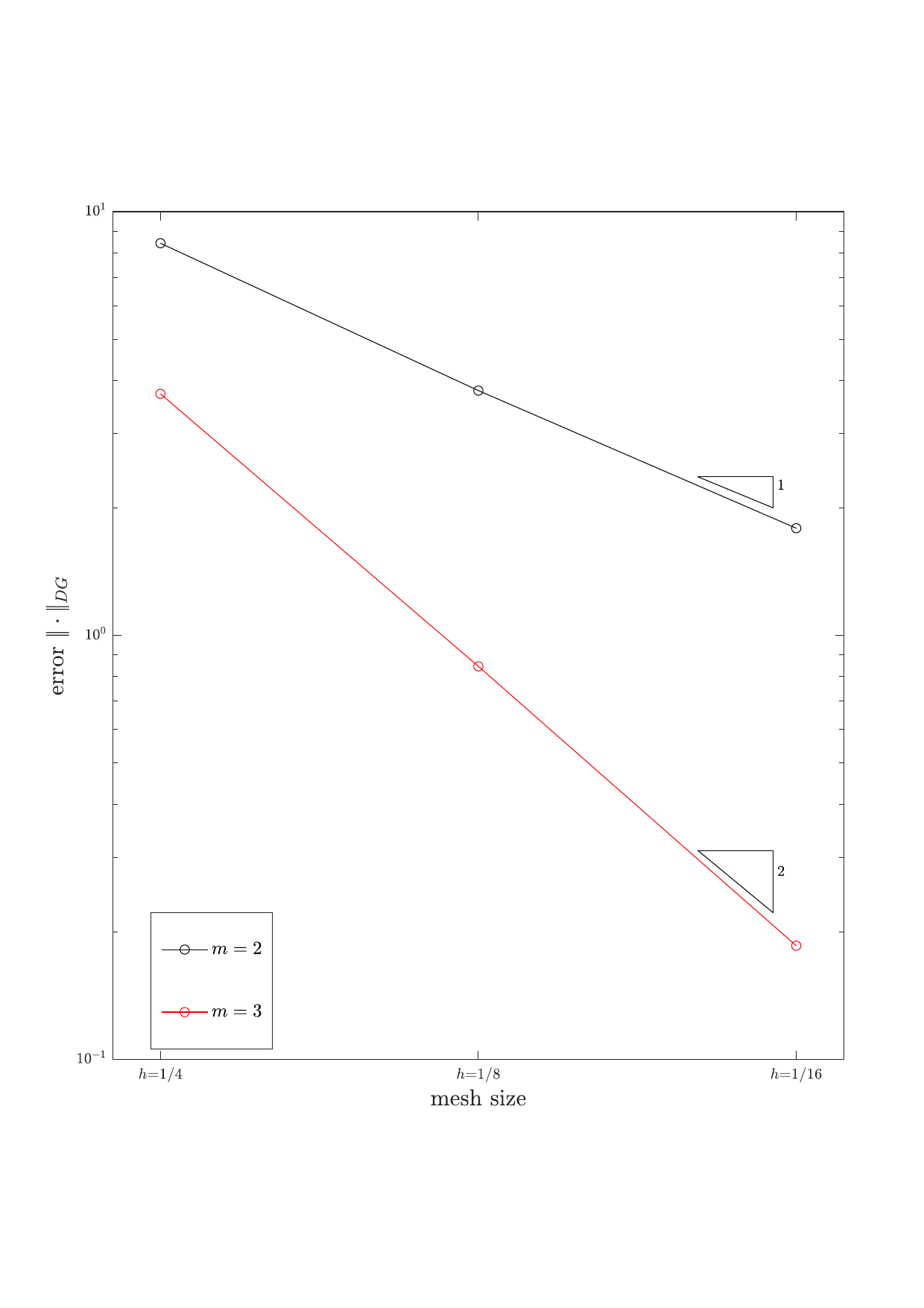}
  \caption{The convergence histories under the $\| \cdot \|_{L^2(\Omega)}$
  (left) and the  $\DGnorm{\cdot}$(right) in Example 2.}
  \label{fig_ex5err}
\end{figure}

\section{Conclusion}
\label{sec_conclusion}
In this paper, we proposed an IPDG method
for the fourth-order curl problem with the reconstructed discontinuous
approximation. The approximation space is based on
patch reconstruction from piecewise constant sapce and can approximate 
functions up to high order accuracy. We show numercial experiments
in two and three dimensions to examine the order of convergence under the energy
norm and the $L^2$ norm.


\begin{appendix}
  \section{Calculating the Reconstruction Constant}
  \label{sec_reconM}
  This appendix gives the method to compute the constant $\Lambda_m$
  for a given mesh $\MTh$. For any element $K \in \MTh$, let $p_1,
  p_2, \ldots, p_l$ be a group of standard orthogonal basis functions
  in $\mb{P}_m(K)$ under the $L^2$ inner product $(\cdot,
  \cdot)_{L^2(K)}$. Then, any polynomial $q \in \mb{P}_m(K)$ can be
  expanded by a group coefficients $\bm{a} = \{a_j\}_{j = 1}^l \in
  \mb{R}^{l}$ such that $q =
  \sum_{j = 1}^l a_j p_j$. We can naturally extend $q$ and all $p_j$
  to the domain $S(K)$ by the polynomial extension. The main step to
  get $\Lambda_m$ is to compute $\Lambda_{m, K}$ for all elements, and
  $\Lambda_{m, K}$ can be represented as 
  \begin{displaymath}
    \Lambda_{m, K}^2 = \max_{\bm{a} \in \mb{R}^l} \frac{|
    \bm{a}|_{l^2}^2 }{h_K^d \bm{a}^T B_K \bm{a}}, \quad B_K =
    \{b_{ij}\}_{l \times l}, \quad b_{ij} = \sum_{\bm{x} \in I(K)}
    p_i(\bm{x})p_j(\bm{x}).
  \end{displaymath}
  From the matrix representation, $\Lambda_{m, K} = (h_K^d
  \sigma_{\min}(B_K))^{-1/2}$, where $\sigma_{\min}(B_K)$ is the
  smallest singular value to $B_K$. 
  Hence, it is enough to observe the
  minimum value of $\sigma_{\min}(B_K)$ for all elements, and
  $\Lambda_m$ can be computed by \eqref{eq_Lambdam}.

  As we remarked in Section \ref{sec_space}, when the patch is wide
  enough, $\Lambda_m$ will admit a uniform upper bound independent of
  the mesh size. Here, we will show $\Lambda_m$ for different size of
  the patch. We consider the triangular mesh with $h = 1/40$
  and the tetrahedral mesh with $h = 1/16$ in two and three
  dimensions, which are used in Example 1 and Example 2. 
  The values of $\Lambda_m$ are collected in Fig.~\ref{fig_const}. 

  \begin{figure}
    \centering
    \includegraphics[width=0.25\textwidth]{./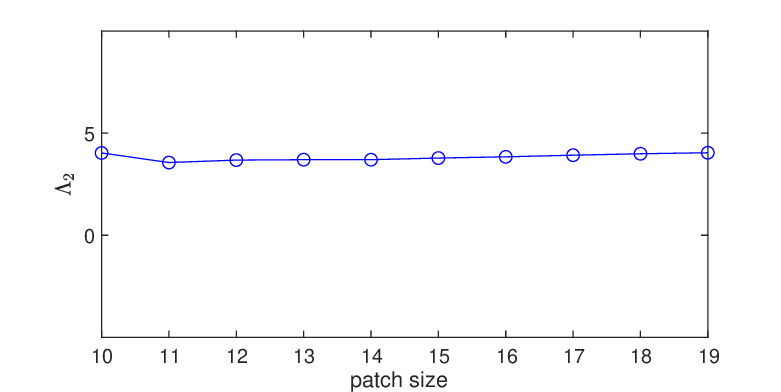}
    \hspace{20pt}
    \includegraphics[width=0.25\textwidth]{./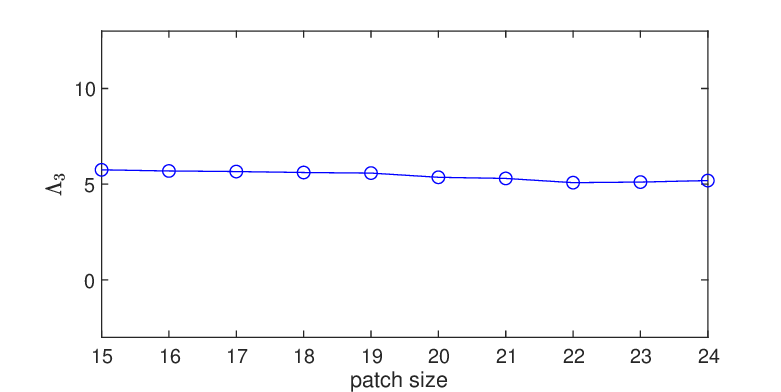}
    \hspace{20pt}
    \includegraphics[width=0.25\textwidth]{./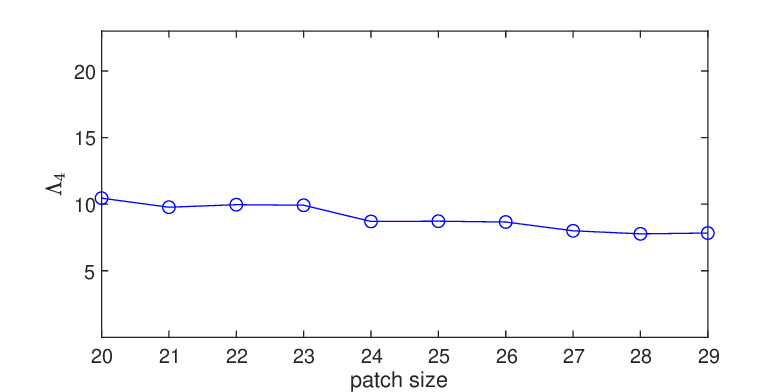}
    \hspace{20pt}
    \includegraphics[width=0.25\textwidth]{./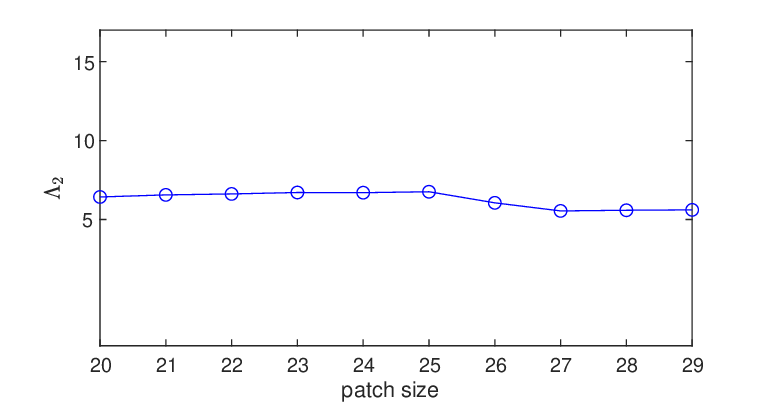}
    \hspace{20pt}
    \includegraphics[width=0.25\textwidth]{./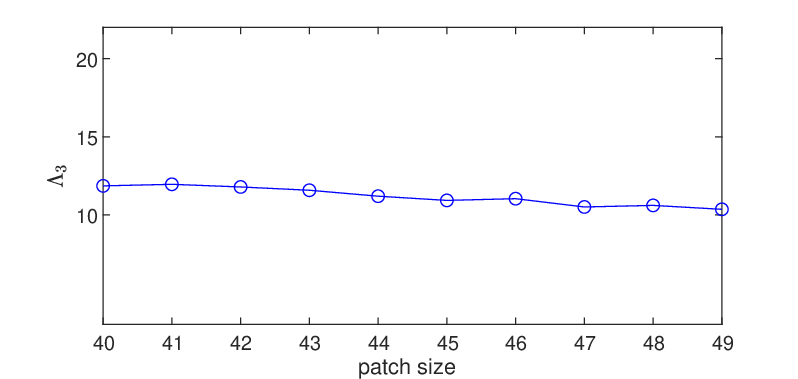}

    \caption{ $\Lambda_m$ in 2d with $ h = 1/40$ (row 1) / 
    $\Lambda_m$ in 3d with $h = 1/16$ (row 2).}
    \label{fig_const}
  \end{figure}

\end{appendix}

\bibliographystyle{amsplain}
\bibliography{../ref}

\end{document}